\newcommand{\Div}{{\rm Div\,}}
\newcommand{\Curl}{{\rm Curl\,}}
\newcommand{\g}{\gamma}
\newcommand{\G}{\Gamma}
\newcommand{\vep}{\varepsilon}
\newcommand{\Om}{\Omega}
\newcommand{\om}{\omega}
\newcommand{\ka}{\kappa}
\newcommand{\la}{\lambda}
\newcommand{\wi}{\widetilde}
\newcommand{\Int}{\int\limits}
\newcommand{\pa}{\partial}
\newcommand{\ov}{\overline}
\newcommand{\I}{{\rm Im}}
\newcommand{\Rt}{{\rm Re}}
\newcommand{\curl}{{\rm curl\,}}
\newcommand{\dive}{{\rm div\,}}
\newcommand{\ti}{\times}
\newcommand{\na}{\nabla}
\newcommand{\mat}{\mathbb}
\newcommand{\se}{\setminus}
\newcommand{\R}{{\mat R}}
\newcommand{\N}{{\mat N}}
\newcommand{\C}{{\mat C}}
\newcommand{\Sp}{{\mat S}}
\newcommand{\ds}{\displaystyle}
\newcommand{\no}{\nonumber}
\newcommand{\be}{\begin{eqnarray}}
\newcommand{\ben}{\begin{eqnarray*}}
\newcommand{\en}{\end{eqnarray}}
\newcommand{\enn}{\end{eqnarray*}}
\begin{document}
\renewcommand{\theequation}{\arabic{section}.\arabic{equation}}

\title{\bf
On recovery of a bounded elastic body by electromagnetic far-field measurements}
\author{Tielei Zhu\thanks{School of Mathematics and Statistics, Xi'an Jiaotong University,
Xi'an 710049, Shaanxi, China ({\tt zhutielei@stu.xjtu.edu.cn})}
\and
Jiaqing Yang\thanks{School of Mathematics and Statistics, Xi'an Jiaotong University,
Xi'an 710049, Shaanxi, China ({\tt jiaq.yang@mail.xjtu.edu.cn})}
\and
Bo Zhang\thanks{NCMIS, LSEC and Academy of Mathematics and Systems Sciences, Chinese Academy of Sciences,
Beijing 100190, China and School of Mathematical Sciences, University of Chinese Academy of Sciences,
Beijing 100049, China ({\tt b.zhang@amt.ac.cn})}
}
\date{}
\maketitle


\begin{abstract}
This paper is concerned with the problem of scattering of a time-harmonic electromagnetic field by
a three-dimensional elastic body. General transmission conditions are considered to model the
interaction between the electromagnetic field and the elastic body on the interface by assuming Voigt's model.
The existence of a unique solution of the interaction problem is proved in an appropriate Sobolev space
by employing a variational method together with the classical Fredholm alternative.
The inverse problem is then considered, which is to recover the elastic body by the scattered wave-field.
It is shown that the shape and location of the elastic body can be uniquely determined by the fixed energy
magnetic (or electric) far-field measurements corresponding to incident plane waves with all polarizations.
\end{abstract}

\begin{keywords}
Interaction, Maxwell's equations, Navier equation, well-posedness, inverse scattering, uniqueness.
\end{keywords}

\begin{AMS}
35R30, 35Q60, 78A46
\end{AMS}

\pagestyle{myheadings}
\thispagestyle{plain}
\markboth{T. Zhu, J. Yang and B. Zhang}{Recovery of elastic body by electromagnetic far-field measurements}

\section{Introduction}\label{sec1}
\setcounter{equation}{0}

The interaction of different physical fields has received considerable attention due to the rapidly
increasing use of composite materials. Therefore, it is significant to develop the related mathematical
model and analysis by physical process. The physical kinematic and dynamic relations are described by
the corresponding partial differential equations (PDEs) with certain boundary-transmission conditions.
Generally, it is difficult to find an appropriate interaction condition connected with different physical
fields on the interface.

For time-harmonic acoustic wave scattering by a solid body, many work has been done on the mathematical 
analysis of the interaction problem (see, e.g., \cite{CP95,GRG00}). Recently, the corresponding inverse 
problems have also been studied mathematically and numerically of detecting an elastic body via the 
measurement of the acoustic scattered wave field. We refer the reader to \cite{DSZ00,PV09,FJB17,AR12} 
for detailed discussions.
In particular, it is shown in \cite{DSZ00} that a uniqueness result was first proved in recovering an
elastic body by the acoustic far-field measurements. The proof was then simplified by Monk and Selgas \cite{PV09}
by using the technique of H\"{a}hner in \cite{P00} for the case of a penetrable, anisotropic obstacle.
However, the analysis in \cite{PV09} relies on the $H^2$-regularity estimate of solutions of the scattering
problem, and thus the proposed method remains complicated. Very recently, a much simpler proof
was introduced by Qu {\em et al.} \cite{FJB17}, which is motivated by the previous work of
the last two authors \cite{JBH18} for inverse acoustic and electromagnetic scattering by a penetrable obstacle,
and can be extended to deal with other more general cases.

In this paper we consider the problem of scattering of a time-harmonic electromagnetic field by a 
three-dimensional elastic body. Assume by Voigt's model that the interaction is allowed only through
the boundary of the body, which means that the model problem can be described by the Maxwell and Navier 
equations coupled with a suitable transmission condition on the interface between
the elastic and electromagnetic medium. It was shown \cite{FG02} that an interaction model was first
introduced by Cakoni and Hsiao with possible interface conditions for the coupled electromagnetic and
elastic fields, where the uniqueness result and equivalent integral equations and
non-local variational formulations have been established for the model.
Applying the framework in \cite{FG02}, Gatica {\em et al.} \cite{GGS10} proved the existence of a unique 
solution of the interaction problem by using a variational method. The result was later extended by
Bernardo {\em et al.} \cite{AAS10} to a different function space for the elastic field,
based on a similar idea to \cite{GGS10}.

Different from \cite{AAS10} and \cite{GGS10}, we study in this paper the interaction problem with
general interface transmission conditions which could model more physical situations in applications.
An equivalent non-symmetric variational formulation is then obtained by using Green's formulas so that 
the existence of a unique solution to the problem can follow from the classical Fredholm alternative 
with a suitable Helmholtz-type decomposition of the electromagnetic field. 
Compared with the forward problem, the inverse problem of determining the elastic body is more challenging 
due to the complication of the interaction model.
To the best of our knowledge, no uniqueness result is available for this problem in the literature.
Inspired by our previous work \cite{JBH18} where a novel technique was introduced for showing uniqueness 
in determining an acoustic or electromagnetic penetrable obstacle, we aim to develop a novel and simple 
technique to prove the unique recovery of the elastic body by the electromagnetic far-field measurement 
at a fixed frequency. The proposed method is mainly based on constructing a well-posed system of PDEs for 
the coupled Maxwell and Navier equations in a small domain near the interface in conjunction with a 
uniform a priori estimate in the $H(\curl,\cdot)\times {\bm H}^1(\cdot)$ norm of solutions to the 
interaction problem when the incident electromagnetic fields are induced by a family of electric dipoles 
with a weak singularity.

The remaining part of the paper is organized as follows. In Section \ref{sec2}, we formulate the interaction 
scattering problem by collecting some useful functions spaces, trace operators and related properties. 
In Section \ref{sec3}, we show the existence of a unique solution to the interaction problem by 
the variational method with aid of a suitable Helmholtz-type decomposition. In Section \ref{sec4}, 
a global uniqueness theorem is proved for the associated inverse problem of determining the elastic body 
from the magnetic or electric far-field measurements at a fixed frequency.

\section{The model problem}\label{sec2}

In this section, we first introduce some basic notations and function spaces used throughout this paper and  
then present the mathematical formulation of the model problem.

\subsection{Preliminaries} \label{sec2.1}

For a complex number $z\in\mat{C}$, its conjugate and modulus are denoted by $\ov{z}$
and $|z|$, respectively. For $x,y\in\mat{C}^3$ define $x\cdot y=\sum_{j=1}^3x_jy_j$.
Let $D\in\R^3$ be a bounded domain with a $C^2$-boundary $\pa D$ and let $\nu$ be the unit outward 
normal to $\pa D$. For $s\in\mat{R}$ denote by $H^s(D)$ and $H^s(\pa D)$ the standard scalar Sobolev spaces 
defined on $D$ and $\pa D$, respectively, with $L^2(\cdot):=H^0(\cdot)$. We also need the following vector 
function spaces defined on $D$ and $\pa D$:
\ben
{\bm H}^s(D)&:=&[H^s(D)]^3,\quad s\in\mat{R}, \\
{\bm H}^s(\pa D)&:=&[H^s(\pa D)]^3,\quad s\in\mat{R}, \\
{\bm H}^s_t(\pa D)&:=&\{\mu\in{\bm H}^s(\pa D): \mu\cdot\nu=0 \},\quad s\in\mat{R},\\
H(\curl,D)&:=&\{ {\bm B}\in {\bm L}^2(D): \curl{\bm B}\in {\bm L}^2(D) \}.
\enn
For each $s\in\R$, $\langle H^{-s}, H^s\rangle$ and $\langle{\bm H}^{-s}, {\bm H}^s\rangle$ denote 
the duality product under the extension of the ${\bm L}^2$-bilinear form
\ben
\langle{\bm u},{\bm v}\rangle:=\Int_D{\bm u}\cdot{\bm v}{\,\rm d}x,\;\;\;{\bm u},\; {\bm v}\in{\bm L}^2(D).
\enn

By \cite{P03} the tangential trace spaces of $H(\curl,D)$ can be characterized as
\ben
H^{-{1}/{2}}_{\Div}(\pa D)&:=&\{\mu\in{\bm H}^{-{1}/{2}}_t(\pa D):\Div_{\pa D}\mu\in H^{-{1}/{2}}(\pa D)\},\\
H^{-{1}/{2}}_{\Curl}(\pa D)&:=&\{\mu\in{\bm H}^{-{1}/{2}}_t(\pa D):\Curl_{\pa D}\mu\in H^{-{1}/{2}}(\pa D)\},
\enn
where $\Div_{\pa D}$ and $\Curl_{\pa D}$ denote the surface divergence and surface curl with respect to 
the boundary $\pa D$, respectively. For convenience, we also use $\na_{\pa D}\cdot$ to denote $\Div_{\pa D}$, 
which is the surface gradient defined by $\na_{\pa D} f:=(\nu\times\na f)\times\nu$ for a smooth function $f$. 
For a smooth vector function ${\bm u}\in [C(\ov{D})]^3$, we introduce the tangential trace mapping $\g_t$ 
and the tangential projection operator $\g_T$ by
\ben
&&\;\g_t{\bm u}: = \nu\ti{\bm u}\qquad\quad {\rm on\;}\pa D,\\
&&\g_T{\bm u}: = \nu\ti({\bm u}\ti\nu)\qquad {\rm on\;}\pa D,
\enn
which can be extended as bounded and surjective operators from $H(\curl, D)$ into $H^{-{1}/{2}}_{\Div}(\pa D)$ 
and $H^{-{1}/{2}}_{\Curl}(\pa D)$, respectively.

Further, the duality product between $H^{-{1}/{2}}_{\Div}(\pa D)$ and $H^{-{1}/{2}}_{\Curl}(\pa D)$ is defined as
\ben\no
\left\langle\varphi,\psi\right\rangle_{H^{-1/2}_{\Div}\times{H}_{\Curl}^{-{1}/{2}}}
:=\Int_{D}\left[\curl{\bm v}\cdot{\bm w}-\curl{\bm w}\cdot{\bm v}\right]{\,\rm d}x
\enn
for ${\bm v},{\bm w}\in H(\curl,D)$ satisfying $\g_t{\bm v}=\varphi$ and $\g_T{\bm w}=\psi$. 
In addition, introduce the tangential trace spaces of ${\bm H}^1(D)$:
\ben
{\bm H}^{{1}/{2}}_{\perp}(\pa D):=\g_t({\bm H}^1(D))\quad{\rm and}\quad {\bm H}^{{1}/{2}}_{||}(\pa D)
:=\g_T({\bm H}^1(D)),
\enn
and denote by ${\bm H}^{-1/2}_{\perp}(\pa D)$ and ${\bm H}^{-1/2}_{||}(\pa D)$ the dual space of 
${\bm H}^{{1}/{2}}_{\perp}(\pa D)$ and ${\bm H}^{{1}/{2}}_{||}(\pa D)$, respectively. 
Clearly, $H^{-{1}/{2}}_{\Div}(\pa D)$ is a closed subspace of ${\bm H}^{-{1}/{2}}_{||}(\pa D)$, 
which can be understood in the sense that 
\be\no
\left\langle\varphi,\psi\right\rangle_{{\bm H}^{-1/2}_{\Div}\times{\bm H}^{1/2}_{||}}
:=\Int_{D}\left[\curl{\bm w}\cdot{\bm v}-\curl{\bm v}\cdot{\bm w}\right]{\,\rm d}x
\en
for ${\bm w}\in H(\curl,D)$ and ${\bm v}\in {\bm H}^1(D)$ satisfying 
$\g_t{\bm w}=\varphi\in H^{-1/2}_{\Div}(\pa D)$ and $\g_T{\bm v}=\psi\in{\bm H}^{1/2}_{||}(\pa D)$.

\subsection{The mathematical formulation}\label{sec2.2}

In this subsection, we formulate the mathematical formulation of the problem of scattering of a time-harmonic electromagnetic wave by an elastic body in $\R^3$. As seen in Figure \ref{fig:label}, the elastic body is 
described by a bounded domain $D$ with a smooth boundary of $C^2$-class, which is assumed to be inhomogeneous 
and anisotropic with the stiffness tensor $\mathcal{C}:=(C_{ijkl}(x))_{i,j,k,l=1}^3$, 
where $C_{ijkl}\in L_\infty(D)$ ($i,j,k,l=1,2,3$), and the density $\rho(x)\in L_\infty(D)$. 
The background medium outside of $D$ is assumed to be homogeneous and
isotropic with constant electric permittivity $\vep_0\in\R_+$ and magnetic permeability $\mu_0\in\R_+$.

\setcounter{equation}{0}
\begin{figure}[htp]
	\centering
	\includegraphics[scale=0.45]{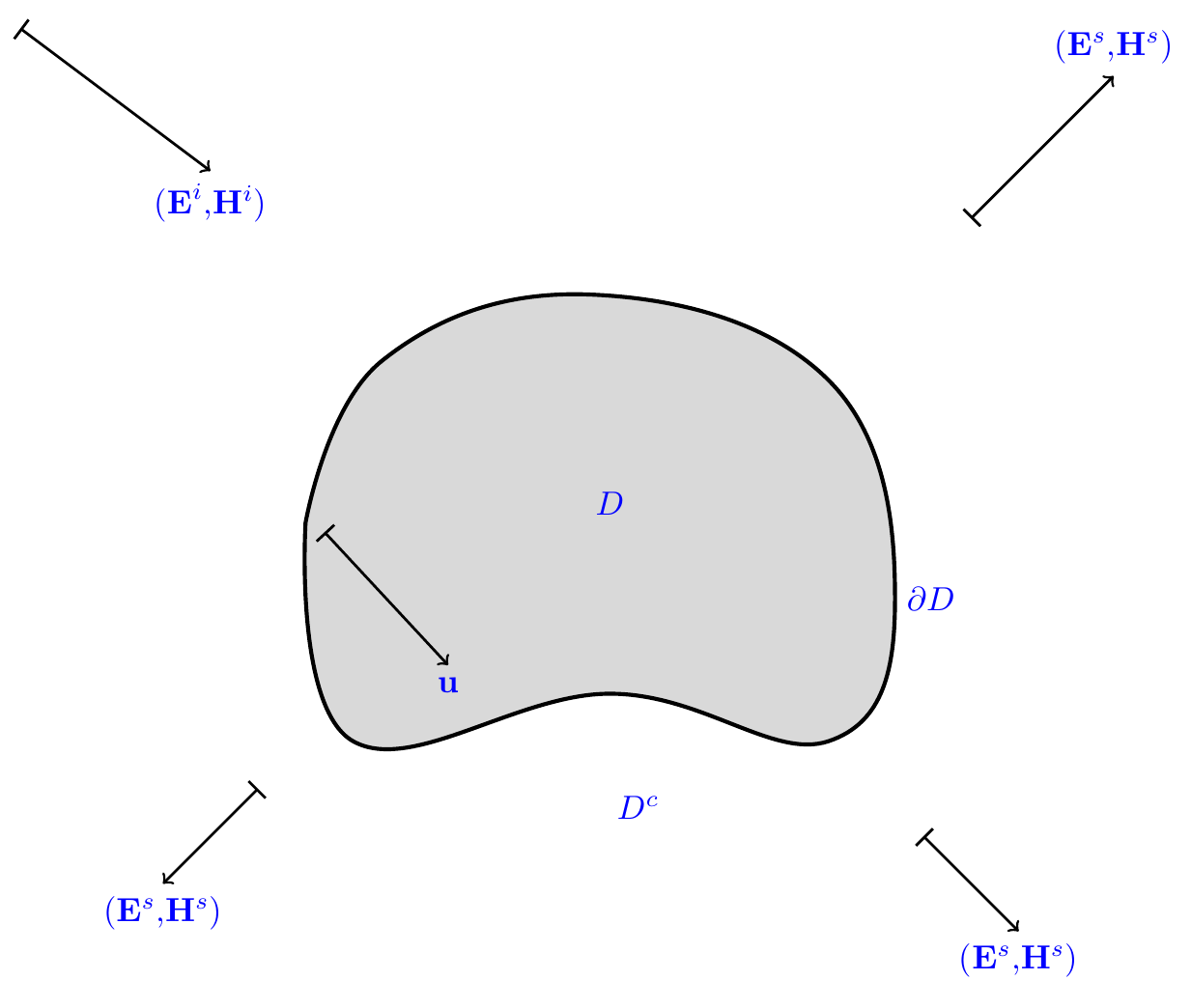}
	\caption{Interaction between electromagnetic wave and a bounded elastic body}
	\vskip -0.8truecm
	\hspace{0.2in}
	\label{fig:label}
\end{figure}

Consider a pair of electromagnetic waves of the form
\be\label{2.1a}
{\bm E}^i(x,d,p)=-\frac{1}{\rm i\ka}\curl^2(pe^{{\rm i}\ka x\cdot d}), 
\quad {\bm H}^i(x,d,p)=\curl(pe^{{\rm i}\ka x\cdot d}),\quad\;x\in\R^3
\en
which is incident on $D$ from the unbounded exterior domain $D^c:=\R^3\se\ov{D}$, 
where $\om\in\R_+$ is the wave frequency,
$d\in\mat{S}^2$ is the direction of wave propagation, $p\in\mat{R}^3$ is the polarization vector 
and $\ka=\om\sqrt{\vep_0\mu_0}$ is the wave number. Then the elastic deformation occurs due to the physical 
property of the elastic body. Following Voigt's model, we can assume that the electromagnetic wave does not 
penetrate the elastic body so that the interaction occurs only on the interface. Under the above physical 
assumption, the elastic field ${\bm u}$ satisfies the Navier equation
\be\label{2.2a}
\na\cdot(\mathcal{C}:\na{\bm u})+\rho\om^2{\bm u}=0\qquad {\rm in}\;\;\; D,
\en
where $\mathcal{C}:\na{\bm u}$ is defined as
\ben
\mathcal{C}:\na{\bm u}:=(\mathcal{C}:\na{\bm u})_{ij}=\sum\limits_{k,l=1}^3C_{ijkl}\frac{\pa u_k}{\pa x_l}
\enn
and $\na\cdot( \mathcal{C}:\na{\bm u})$ is defined as
\ben
\na\cdot(\mathcal{C}:\na{\bm u})=\sum\limits_{j,k,l=1}^3\frac{\pa}{\pa x_j}(C_{ijkl}\frac{\pa u_k}{\pa x_l}).
\enn
Moreover, the stiffness tensor $\mathcal{C}$ in (\ref{2.1a}) satisfies the symmetry condition
\ben
C_{ijkl}=C_{klij}=C_{jikl}=C_{ijlk}
\enn
and the Legendre elliptic condition
\ben
\sum\limits_{i,j,k,l=1}^3C_{ijkl}(x)a_{ij}a_{kl}\geq c_0\sum\limits_{i,j=1}^3|a_{ij}|^2,\quad
a_{ij}=a_{ji}
\enn
for some positive constant $c_0>0$.

In the exterior domain $D^c$, the electromagnetic field $({\bm E},{\bm H})$, which is the sum of 
the incident field $({\bm E}^i,{\bm H}^i)$ and the scattered field $({\bm E}^s,{\bm H}^s)$, 
satisfies the Maxwell equations
\be\label{2.3a}
\curl{\bm E}-{\rm i}\ka{\bm H}=0,
\qquad \curl{\bm H}+{\rm i}\ka{\bm E}=0 \qquad {\rm in}\;\;\;D^c,
\en
with the scattered field $({\bm E}^s,{\bm H}^s)$ satisfying the Silver-M\"{u}ller radiation condition
\be\label{2.4a}
\lim_{|x|\to\infty}|x|({\bm E}^s\times\hat{x}+{\bm H}^s)=0,
\en
where $\hat{x}=x/|x|\in\Sp^2$.

On the interface, the electromagnetic and elastic fields are assumed to be coupled by the general 
transmission conditions (cf. \cite{FG02}):
\be\label{2.5a}
T{\bm u}-b_1\nu\times{\bm H}^s&=&b_1\nu\times{\bm H}^i\qquad {\rm on}\;\;\;\pa D,\\ \label{2.5b}
\nu \ti{\bm u}-b_2\nu \ti{\bm E}^s&=&b_2\nu\times{\bm E}^i\qquad {\rm on}\;\;\;\pa D,
\en
where $b_1$, $b_2\in\C$, $b_1b_2\not=0$, $\nu$ is the unit outward normal to $\pa D$ and 
$T$ is defined as
\ben
(T{\bm u})_i=\sum\limits_{j,k,l=1}^3\nu_jC_{ijkl}\frac{\pa u_k}{\pa x_l},\;\;\;\;i=1,2,3.
\enn
We refer the reader to \cite{FG02,GGS10} for detailed discussions on different choices of $b_1$ and $b_2$.

By the radiation condition (\ref{2.4a}), it is well-known that the scattered field has the asymptotic behavior
\ben\no
{\bm H}^s(x,d,p)&=&\frac{e^{{\rm i}\ka|x|}}{|x|}{\bm H}^s_{\infty}(\hat{x},d,p)+O\left(\frac{1}{|x|^2}\right),
\quad{\rm as}\;\;\;|x|\to\infty\\
{\bm E}^s(x,d,p)&=&\frac{e^{{\rm i}\ka|x|}}{|x|}{\bm E}^s_{\infty}(\hat{x},d,p)+O\left(\frac{1}{|x|^2}\right),
\quad {\rm as}\;\;\; |x|\to\infty,
\enn
where ${\bm E}^s_{\infty}(\hat{x},d,p)$ and ${\bm H}^s_{\infty}(\hat{x},d,p)$ denote the electric and magnetic 
far-field patterns, respectively, which are analytic in $\hat{x}\in\Sp^2$ and $d\in\Sp^2$, respectively, 
and satisfy the relations (cf. \cite{DR13}):
\be\label{2.6a}
{\bm H}^s_{\infty}(\hat{x},d,p)=\hat{x}\ti{\bm E}^s_{\infty}(\hat{x},d,p),\quad
\hat{x}\cdot{\bm E}^s_{\infty}(\hat{x},d,p)=0,\quad
\hat{x}\cdot{\bm H}^s_{\infty}(\hat{x},d,p)=0.\;\;
\en

\section{The well-posedness of the interaction problem}\label{sec3}
\setcounter{equation}{0}

In this section, we prove the well-posedness of the interaction problem (\ref{2.1a})-(\ref{2.5a}), 
employing a variation method. 
Under the transmission conditions (\ref{2.5a}) and (\ref{2.5b}), 
the existence of a unique solution can be obtained by showing the variational formulation to be
of Fredholm with index $0$ in an appropriate Sobolev space.

\subsection{Uniqueness of solutions}

As known for the fluid-solid interaction problem, non-uniqueness may exist for certain frequencies 
which are called Jones frequencies. Similarly, there may exist pathological frequencies in the interaction 
between an electromagnetic wave and an elastic body so that a nontrivial solution exists for the 
homogeneous problem corresponding to the problem (\ref{2.1a})-(\ref{2.5a}). 
Thus, introduce the homogeneous problem
\be\label{3.1a}
\nabla\cdot(\mathcal{C}:\nabla{\bm u})+\rho\om^2{\bm u}&=&0\qquad{\rm in}\;\;\;D,\\ \label{3.1b}
\nu \ti{\bm u}&=&0 \qquad{\rm on}\;\;\;\pa D,\\ \label{3.1c}
T{\bm u}&=&0 \qquad{\rm on}\;\;\; \pa D
\en
and let the set $\mathcal{P}(\om)$ be consisting of the frequency $\om\in\R$ such that (\ref{3.1a}) 
has a nontrivial solution. 
Then we have the following result on uniqueness of solutions to the problem (\ref{2.1a})-(\ref{2.5a}).

\begin{theorem}\label{thm3.1}
Assume that $\rho,\ka,\om\in\R$ and $\om\notin\mathcal{P}(\om)$. If $\Rt(b_1\ov{b}_2)=0$, then 
the scattering problem $(\ref{2.1a})-(\ref{2.5a})$ has at most one solution.
\end{theorem}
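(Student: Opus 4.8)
The plan is to show that if $({\bm E}^s,{\bm H}^s,{\bm u})$ solves the homogeneous scattering problem (i.e.\ with zero incident field in \eqref{2.5a}--\eqref{2.5b}), then $({\bm E}^s,{\bm H}^s)\equiv 0$ in $D^c$ and ${\bm u}\equiv 0$ in $D$. The starting point is the standard energy identity obtained by multiplying the Navier equation \eqref{3.1a} by $\ov{\bm u}$, integrating over $D$, and using Green's formula for the elasticity operator. This gives
\be\no
\Int_D\big(\mathcal{C}:\na{\bm u}\big):\ov{\na{\bm u}}\,{\rm d}x-\rho\om^2\Int_D|{\bm u}|^2\,{\rm d}x
=\Int_{\pa D}T{\bm u}\cdot\ov{\bm u}\,{\rm d}s.
\en
Since $\rho,\om$ are real and the left-hand side is real (using the symmetry of $\mathcal{C}$ and the Legendre ellipticity), the imaginary part of the boundary term vanishes:
$\I\int_{\pa D}T{\bm u}\cdot\ov{\bm u}\,{\rm d}s=0$.
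Next I would insert the homogeneous transmission conditions $T{\bm u}=b_1\nu\times{\bm H}^s$ and $\nu\times{\bm u}=b_2\nu\times{\bm E}^s$ on $\pa D$, so that the boundary integrand becomes $b_1\nu\times{\bm H}^s\cdot\ov{\bm u}$, and the tangential components of $\ov{\bm u}$ are $\ov{b_2}\,\nu\times\ov{{\bm E}^s}$. Using the pointwise identity $(\nu\times{\bm H}^s)\cdot(\nu\times\ov{{\bm E}^s})$ together with $\ov{z}$-algebra, one rewrites $\int_{\pa D}T{\bm u}\cdot\ov{\bm u}\,{\rm d}s = b_1\ov{b_2}\int_{\pa D}\g_t{\bm H}^s\cdot\ov{\g_T{\bm E}^s}\,{\rm d}s$ (up to the correct sign, to be checked via the duality pairing defined in Section~\ref{sec2.1}).

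The second ingredient is to relate $\int_{\pa D}\g_t{\bm H}^s\cdot\ov{\g_T{\bm E}^s}\,{\rm d}s$ to the radiated electromagnetic energy. From the Maxwell system \eqref{2.3a} in $D^c$ and Green's (Stratton--Chu type) formula on the annulus between $\pa D$ and a large sphere $S_R$, one has
$\int_{\pa D}\nu\times{\bm H}^s\cdot\ov{{\bm E}^s}\,{\rm d}s=-\int_{S_R}\hat{x}\times{\bm H}^s\cdot\ov{{\bm E}^s}\,{\rm d}s$
(the volume terms cancel because $\curl{\bm H}^s=-{\rm i}\ka{\bm E}^s$ and $\curl{\bm E}^s={\rm i}\ka{\bm H}^s$ with $\ka$ real). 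Then the Silver--M\"uller radiation condition \eqref{2.4a}, combined with the far-field expansion and the orthogonality relations \eqref{2.6a}, yields the classical formula $\I\int_{S_R}\hat{x}\times{\bm H}^s\cdot\ov{{\bm E}^s}\,{\rm d}s\to \|{\bm E}^s_\infty\|_{L^2(\Sp^2)}^2$ (again modulo a fixed positive constant). Putting the two ingredients together: taking imaginary parts and using $\Rt(b_1\ov{b_2})=0$, i.e.\ $b_1\ov{b_2}$ is purely imaginary, the quantity $\I\big(b_1\ov{b_2}\,\int_{\pa D}\g_t{\bm H}^s\cdot\ov{\g_T{\bm E}^s}\,{\rm d}s\big)$ collapses to a real nonzero multiple of $\Rt\int_{\pa D}\nu\times{\bm H}^s\cdot\ov{{\bm E}^s}\,{\rm d}s$ — but careful bookkeeping should instead produce a nonnegative multiple of $\|{\bm E}^s_\infty\|^2$ set equal to zero. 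Hence ${\bm E}^s_\infty=0$ on $\Sp^2$.

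From ${\bm E}^s_\infty\equiv 0$, Rellich's lemma and the unique continuation principle for the Maxwell system give ${\bm E}^s={\bm H}^s\equiv 0$ in $D^c$. The transmission conditions then force $\nu\times{\bm u}=0$ and $T{\bm u}=0$ on $\pa D$, so ${\bm u}$ solves the homogeneous interior Navier problem \eqref{3.1a}--\eqref{3.1c}; since $\om\notin\mathcal{P}(\om)$ by hypothesis, ${\bm u}\equiv 0$ in $D$, and uniqueness follows. The main obstacle I anticipate is not any single estimate but the sign/constant bookkeeping in the first two steps: one must track the conventions in the duality pairing $\langle\cdot,\cdot\rangle_{H^{-1/2}_{\Div}\times H^{-1/2}_{\Curl}}$ defined in Section~\ref{sec2.1}, the orientation of $\nu$, the factor ${\rm i}\ka$ in \eqref{2.1a}, and the complex conjugates, so that the condition $\Rt(b_1\ov{b_2})=0$ really does kill the "wrong-sign" term and leaves a definite-sign energy expression. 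A secondary technical point is justifying Green's formulas at the low regularity $H(\curl,D^c)\times{\bm H}^1(D)$, which is handled by the trace theorems and duality products already recorded in Section~\ref{sec2.1}.
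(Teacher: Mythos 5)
Your proposal is correct and follows essentially the same route as the paper: the interior elastic energy identity, substitution of the homogeneous transmission conditions to convert $\int_{\pa D}T{\bm u}\cdot\ov{\bm u}\,{\rm d}s$ into $b_1\ov{b}_2$ times the electromagnetic boundary pairing, the observation that $\Rt(b_1\ov{b}_2)=0$ forces the sign-definite far-field term to vanish, Rellich's lemma to kill $({\bm E}^s,{\bm H}^s)$, and finally the assumption $\om\notin\mathcal{P}(\om)$ to kill ${\bm u}$. The sign bookkeeping you flag is exactly the content the paper also leaves implicit, so nothing essential is missing.
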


\begin{proof}
Let ${\bm E}^i={\bm H}^i=0$. Then it is enough to prove that ${\bm E}^s={\bm H}^s=0$.
Using Green's formula and the transmission conditions (\ref{2.5a}) and (\ref{2.5b}), we have
\ben
\Int_{\pa D}\nu\ti\ov{{\bm E}^s}\cdot{\bm H}^s{\,\rm d}s
=-\frac{1}{b_1\ov{b}_2}\Int_{\pa D}T{\bm u}\cdot\ov{\bm u}{\,\rm d}s 
=-\frac{1}{b_1\ov{b}_2}\Int_{D}[\mathcal{E}({\bm u},\ov{{\bm u}})-\rho\om^2|{\bm u}|^2]{\,\rm d}x,
\enn
where
\ben
\mathcal{E}({\bm u},{\bm v}):=\sum\limits_{i,j,k,l=1}^3C_{ijkl}\frac{\pa u_k}{\pa x_l}\frac{\pa v_i}{\pa x_j}
\enn
for ${\bm u},{\bm v}\in {\bm H}^1(D)$. Thus
\ben
\Rt \Int_{\pa D}\nu\ti\ov{{\bm E}^s}\cdot{\bm H}^s{\,\rm d}s
=-\Rt\frac{1}{b_1\ov{b}_2}\Int_{D}[\mathcal{E}({\bm u},\ov{{\bm u}})-\rho\om^2|{\bm u}|^2]{\,\rm d}x.
\enn
By this, Rellich's lemma (see \cite{DR13}) and the fact that $\Rt(b_1\ov{b}_2)=0$, we conclude
that ${\bm E}^s={\bm H}^s=0$. Then the elastic field ${\bm u}$ satisfies (\ref{3.1a}), yielding ${\bm u}=0$ 
since $\om\notin\mathcal{P}(\om)$.
\end{proof}

\subsection{Existence of solutions}

We now prove the existence of solutions of the scattering problem (\ref{2.1a})-(\ref{2.5a}),  
employing a variational method. To this end, we eliminate the electric field ${\bm E}$ and consider 
the boundary value problem for $({\bm H},{\bm u})$:
\be\label{3.2a}
\left\{\begin{array}{ll}
\ds\na\cdot(\mathcal{C}:\na{\bm u})+\rho\om^2{\bm u}=0 & \quad {\rm in}\;\;\;D,\\[1mm]
\ds\curl\curl{\bm H}-\ka^2{\bm H}=0  & \quad{\rm in}\;\;\; D^c,\\[1mm]
\ds T{\bm u}- b_1\nu\times{\bm H}={\bm f_1} & \quad {\rm on}\;\;\; \pa D,\\[1mm]
\ds \nu\times{\bm u}+\frac{b_2}{{\rm i}\ka}\nu\times\curl{\bm H}={\bm f_2} & \quad{\rm on}\;\;\;\pa D,\\[1mm]
\ds\lim_{r\to\infty}r(\hat{x}\times\curl{\bm H}+{\rm i}\ka{\bm H})=0  &\quad r=|x|,
\end{array}
\right.
\en
where ${\bm f}_1\in{\bm H}^{-{1}/{2}}(\pa D)$ and ${\bm f_2}\in H^{-{1}/{2}}_{\Div}(\pa D)$.
Note that the scattering problem (\ref{2.1a})-(\ref{2.5a}) can be viewed as a special case of 
the problem (\ref{3.2a}) with ${\bm f}_1:=b_1\nu\times{\bm H}^i$ and 
${\bm f}_2:=(b_2/{\rm i}\ka)\nu\times\curl{\bm H}^i$. 

We now reduce the problem (\ref{3.2a}) into one in the bounded domain $B_R:=\{x\in\R^3:|x|\leq R\}$ 
with $R$ large enough. To this end, we introduce the Calder\'{o}n mapping 
$G_e: H^{-{1}/{2}}_{\Div}(S_R)\to H^{-{1}/{2}}_{\Div}(S_R)$ defined by
\be\label{3.2aa}
G_e(\la):=\frac{1}{{\rm i}\ka}\hat{x}\ti\curl\wi{{\bm w}}\qquad{\rm on}\;\;\;S_R:=\pa B_R
\en
for $\la\in H^{-{1}/{2}}_{\Div}(S_R)$, where $\wi{{\bm w}}$ satisfies the problem 
\ben
\left\{\begin{array}{ll}
\ds\curl\curl\wi{{\bm w}}-\ka^2\wi{{\bm w}}=0& \quad {\rm in\;}\;\;\R^3\se\ov{B_R},\\[1mm]
\ds\hat{x}\ti\wi{{\bm w}}=\la &\quad {\rm on\;}\;\; S_R,\\[1mm]
\ds\lim_{r\to\infty} r(\hat{x}\times\curl\wi{{\bm w}}+{\rm i}\ka\wi{{\bm w}})=0& \quad r=|x|.
\end{array}
\right.
\enn
The Calder\'{o}n mapping $G_e$ has the following properties which were proved in \cite{P03}.

\begin{lemma}\label{lem3.3} 
Let $\wi{G}_e$ be defined as $G_e$ in $(\ref{3.2aa})$ with $\ka={\rm i}$. Then
\begin{enumerate}
\item[{\rm(a)}] $G_e+{\rm i}\ka\wi{G}_e$ is compact from $H^{-1/2}_{\Div}(S_R)$ to $H^{-1/2}_{\Div}(S_R)$;
\item[{\rm(b)}] $\langle\wi{G}_e\la,\ov{\la}\times\hat{x}\rangle_{H^{-1/2}_{\Div}\times H^{-1/2}_{\Curl}}<0$ 
for any $\la\in H^{-1/2}_{\Div}(S_R)$ with $\la\neq 0$;
\item[{\rm(c)}] $G_e$ can be split as $G_e\la:=G_e^1\la+G_e^2\la$ for $\la\in H^{-1/2}_{Div}(S_R)$ 
such that
\begin{enumerate}
\item[{\rm(c.1)}] The map ${\bm H}\mapsto G_e^1(\hat{x}\ti {\bm H})$ is compact from $X_0$ into 
$H^{-1/2}_{Div}(S_R)$, where $X_0$ is defined in (\ref{3.6cc}) below;
\item[{\rm(c.2)}] ${\rm i}\ka\langle G_e^2(\hat{x}\times{\bm H}),\g_T\ov{\bm H}\rangle_{H^{-1/2}_{\Div}
\times H^{-1/2}_{\Curl}}\ge 0$ for ${\bm H}\in H(\curl,B_R)$.
\end{enumerate}
\end{enumerate}
\end{lemma}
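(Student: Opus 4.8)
The plan is to treat these as the standard structural properties of the exterior electric-to-magnetic Calder\'on operator on the sphere $S_R$, and to establish them by diagonalising $G_e$ through spherical harmonics and then combining the resulting symbol asymptotics with an energy identity for the dissipative problem obtained at $\ka={\rm i}$. First I would decompose any $\la\in H^{-1/2}_{\Div}(S_R)$ into a surface-gradient and a surface-curl part, $\la=\na_{S_R}p+\na_{S_R}q\ti\hat{x}$, with scalar potentials $p,q$ expanded in spherical harmonics $Y_n^m$; then the radiating solution $\wi{\bm w}$ of the exterior problem in $(\ref{3.2aa})$ splits correspondingly into transverse-electric and transverse-magnetic vector spherical waves built from $h_n^{(1)}(\ka r)Y_n^m$ and $r^{-1}\big(rh_n^{(1)}(\ka r)\big)'Y_n^m$, $h_n^{(1)}$ being the spherical Hankel function of the first kind. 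Consequently $G_e$ acts as a Fourier multiplier on $S_R$ through two scalar symbols $g_n^{\rm TE}(\ka)$ and $g_n^{\rm TM}(\ka)$ built from $h_n^{(1)}(\ka R)$ and $\big(rh_n^{(1)}(\ka r)\big)'|_{r=R}$, and the large-order asymptotics of spherical Hankel functions show that the leading term of each symbol is \emph{independent of the wave number}, the $\ka$-dependence entering only at strictly lower order in $n$.

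For (a), since $\wi{G}_e$ is $G_e$ evaluated at $\ka={\rm i}$, its symbols carry the same wave-number-independent leading terms, so in $G_e+{\rm i}\ka\wi{G}_e$ these leading terms cancel and the remaining symbols are of strictly lower order in $n$ than those of $G_e$; transferring this symbol gain through the (anisotropic) norm of $H^{-1/2}_{\Div}(S_R)$ shows that $G_e+{\rm i}\ka\wi{G}_e$ maps $H^{-1/2}_{\Div}(S_R)$ into a space compactly embedded in it, which is (a). Equivalently, and without Fourier analysis, the difference $\wi{\bm w}_\ka-\wi{\bm w}_{\rm i}$ of the two exterior solutions sharing the tangential trace $\la$ has vanishing $\g_t$-trace on $S_R$ and satisfies a Maxwell equation with an ${\bm L}^2$ right-hand side near $S_R$, hence is one order smoother up to $S_R$ by interior elliptic regularity, so that $\hat{x}\ti\curl(\wi{\bm w}_\ka-\wi{\bm w}_{\rm i})|_{S_R}$ lies in a space compactly embedded in $H^{-1/2}_{\Div}(S_R)$.

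For (b), I would take $\ka={\rm i}$, so that $\wi{\bm w}$ solves $\curl\curl\wi{\bm w}+\wi{\bm w}=0$ in $\R^3\se\ov{B_R}$ with $\hat{x}\ti\wi{\bm w}=\la$ and decays exponentially. Multiplying by $\ov{\wi{\bm w}}$, integrating over $\{R<|x|<\rho\}$, applying Green's formula for $\curl\curl$, and letting $\rho\to\infty$ (the sphere-at-infinity contribution vanishing by the exponential decay), one obtains
\ben
\big\langle\wi{G}_e\la,\,\ov\la\ti\hat{x}\big\rangle_{H^{-1/2}_{\Div}\ti H^{-1/2}_{\Curl}}
=-\Int_{\R^3\se\ov{B_R}}\Big(|\curl\wi{\bm w}|^2+|\wi{\bm w}|^2\Big)\,{\rm d}x ,
\enn
which is strictly negative whenever $\la\neq0$, since $\la\neq0$ forces $\wi{\bm w}\neq0$ by uniqueness for the exterior trace problem at $\ka={\rm i}$.

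Finally, for (c) I would combine (a) and (b) by writing $G_e=G_e^1+G_e^2$ with $G_e^1:=G_e+{\rm i}\ka\wi{G}_e$ and $G_e^2:=-{\rm i}\ka\wi{G}_e$. Then (c.1) holds because the tangential trace ${\bm H}\mapsto\g_t{\bm H}=\hat{x}\ti{\bm H}$ is bounded from $X_0\subset H(\curl,B_R)$ into $H^{-1/2}_{\Div}(S_R)$ while $G_e^1$ is compact on $H^{-1/2}_{\Div}(S_R)$ by (a), so the map in (c.1) is a bounded map followed by a compact one; and (c.2) is the definiteness (b) transcribed to the field $\la=\hat{x}\ti{\bm H}$, using the identity $\ov{(\hat{x}\ti{\bm H})}\ti\hat{x}=\g_T\ov{\bm H}$ on $S_R$, which turns ${\rm i}\ka\big\langle G_e^2(\hat{x}\ti{\bm H}),\g_T\ov{\bm H}\big\rangle$ into a fixed real multiple of $\big\langle\wi{G}_e\la,\ov\la\ti\hat{x}\big\rangle$. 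I expect the genuinely delicate step to be (a): one must control the $n\to\infty$ behaviour of the Calder\'on symbols uniformly and identify precisely the $\ka$-independent leading term, and then transfer the resulting one-order symbol gain into compactness in the anisotropic trace space $H^{-1/2}_{\Div}(S_R)$, whose norm weighs the surface-gradient and surface-curl components differently; in the multiplier-free variant the difficulty instead becomes the up-to-the-boundary elliptic estimate for the Maxwell operator with vanishing tangential data. The energy identity in (b) and the algebraic bookkeeping in (c) are then routine within the function-space framework of Section \ref{sec2}.
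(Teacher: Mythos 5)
The paper does not actually prove Lemma~\ref{lem3.3}; it simply attributes all three parts to Monk's book \cite{P03}, so your attempt to supply a self-contained argument is welcome. Your energy identity for (b) is correct and is the standard proof. However, there are two genuine gaps. The first is a sign contradiction in (c.2): with your splitting $G_e^2:=-{\rm i}\ka\wi{G}_e$ and $\la:=\hat{x}\times{\bm H}$ (so that $\g_T\ov{\bm H}=\ov{\la}\times\hat{x}$), you get
\be\no
{\rm i}\ka\langle G_e^2(\hat{x}\times{\bm H}),\g_T\ov{\bm H}\rangle_{H^{-1/2}_{\Div}\times H^{-1/2}_{\Curl}}
=({\rm i}\ka)(-{\rm i}\ka)\langle\wi{G}_e\la,\ov{\la}\times\hat{x}\rangle
=\ka^2\langle\wi{G}_e\la,\ov{\la}\times\hat{x}\rangle,
\en
and since $\ka=\om\sqrt{\vep_0\mu_0}>0$ is real, your own part (b) forces this to be \emph{strictly negative} for $\la\neq 0$ --- the opposite of the asserted ``$\ge 0$''. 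The ``fixed real multiple'' you invoke is $\ka^2>0$, so it does not flip the sign. This is not convention-dependent: whatever orientation one adopts for the pairing on $S_R$, the three assertions ``$\langle\wi{G}_e\la,\ov\la\times\hat x\rangle<0$'', ``$G_e^2=-{\rm i}\ka\wi G_e$'' and ``$\ka^2\langle\wi{G}_e\la,\ov\la\times\hat x\rangle\ge0$'' are mutually inconsistent. You must either take $G_e^2=+{\rm i}\ka\wi{G}_e$ (but then $G_e^1=G_e-{\rm i}\ka\wi{G}_e$, whose compactness is \emph{not} what (a) provides) or track down the sign convention in \cite{P03}; as written, your (b) and your (c.2) cannot both hold.

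The second gap is the mechanism you propose for (a). The leading symbols of $G_e$ are \emph{not} wave-number independent: because of the $\tfrac{1}{{\rm i}\ka}$ prefactor in (\ref{3.2aa}), the two Hodge components of the trace space carry symbols behaving like $\tfrac{{\rm i}n}{\ka R}$ (on the $\na_{S_R}Y_n^m$ part) and $\tfrac{{\rm i}\ka R}{n}$ (on the $\hat{x}\times\na_{S_R}Y_n^m$ part). A direct check shows the combination $G_e+{\rm i}\ka\wi{G}_e$ cancels the leading term on the surface-curl component (symbols $\tfrac{{\rm i}\ka R}{n}+{\rm i}\ka\cdot\tfrac{-R}{n}=0$ to leading order) but \emph{not} on the gradient component, where the leading terms add to $\tfrac{{\rm i}n}{R}(\ka^{-1}+\ka)\neq 0$, i.e.\ the same order $n$ as $G_e$ itself; measured in the anisotropic $H^{-1/2}_{\Div}(S_R)$ norm this gives boundedness but no compactness on that component. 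So your symbol argument establishes compactness only of the restriction of $G_e+{\rm i}\ka\wi{G}_e$ to traces of the form $\hat{x}\times\na\phi$ (which, incidentally, is all that the paper actually uses, through the form $b(\phi,\psi)$), not the full statement (a). Your fallback regularity argument also needs more care: $\curl\curl$ is not elliptic, so ``one order smoother'' for $\wi{\bm w}_\ka-\wi{\bm w}_{\rm i}$ requires you to also use $\dive(\wi{\bm w}_\ka-\wi{\bm w}_{\rm i})=0$, and even then an $H^{1/2}$ bound on the tangential trace of the curl does not by itself give compactness into $H^{-1/2}_{\Div}(S_R)$, since that space also demands control of the surface divergence.
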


With the aid of the Calder\'{o}n map $G_e$, the problem (\ref{3.2a}) can be equivalently reduced to 
the boundary value problem
\be\label{3.3a}
\left\{
\begin{array}{ll}
\ds\na\cdot(\mathcal{C}:\na{\bm u})+\rho\om^2 {\bm u}=0 &\quad {\rm in\;}\;\; D,\\[1mm]
\ds\curl\curl {\bm H} -\ka^2{\bm H}=0  &\quad {\rm in\;}\;\; B_R\se\ov{D},\\[1mm]
\ds T{\bm u}- b_1\nu \ti{\bm H}={\bm f_1}  & \quad {\rm on\;}\;\; \pa D,\\[1mm]
\ds\nu\times{\bm u}+\frac{b_2}{{\rm i}\ka}\nu\times\curl{\bm H}={\bm f_2}  &\quad {\rm on\;}\;\; \pa D,\\[1mm]
\ds\hat{x}\times\curl{\bm H}={\rm i}\ka G_e(\hat{x}\times{\bm H}) &\quad {\rm on\;}\;\; S_R,
\end{array}
\right.
\en
where ${\bm f}_1\in {\bm H}^{-1/2}(\pa D)$ and ${\bm f_2}\in H^{-1/2}_{\Div}(\pa D)$.

Multiplying the first equation of (\ref{3.3a}) with ${\bm v}\in{\bm H}^1(D)$ and using integration by parts 
together with the third equation of (\ref{3.3a}) yield 
\ben
\Int_D(\mathcal{E}({\bm u},\ov{{\bm v}})-\rho\om^2{\bm u}\cdot\ov{\bm v} ){\,\rm d}x
- b_1\langle \g_t{\bm H} ,\g_T\ov{\bm v}\rangle_{H^{-\frac{1}{2}}_{\Div}\ti H^{-\frac{1}{2}}_{\Curl}}
=\langle {\bm f}_1,\ov{\bm v}\rangle_{{\bm H}^{-\frac{1}{2}}\ti {\bm H}^{\frac{1}{2}}}
\enn
or equivalently 
\ben
\frac{-{\rm i}\ka}{b_1\ov{b}_2}\Int_D(\mathcal{E}({\bm u},\ov{{\bm v}})
-\rho\om^2{\bm u}\cdot\ov{\bm v} ){\,\rm d}x
+\frac{{\rm i}\ka}{\ov{b}_2}\langle\g_t{\bm H},\g_T\ov{\bm v}
\rangle_{H^{-\frac{1}{2}}_{\Div}\times H^{-\frac{1}{2}}_{\Curl}}
=\frac{-{\rm i}\ka}{b_1\ov{b}_2}\langle{\bm f}_1,\ov{\bm v}
\rangle_{{\bm H}^{-\frac{1}{2}}\times{\bm H}^{\frac{1}{2}}}
\enn
for all ${\bm v}\in{\bm H}^1(D)$. 
Multiplying the second equation of (\ref{3.3a}) by $-{\bm w}\in H(\curl,B_R\se\ov{D})$ 
and utilizing the fourth and fifth equations of (\ref{3.3a}) give
\ben
&&-\Int_{B_R\se\ov{D}}(\curl{\bm H}\cdot\curl\ov{{\bm w}}-\ka^2{\bm H}\cdot\ov{{\bm w}}){\,\rm d}x
-{\rm i}\ka\langle G_e(\hat{x}\times{\bm H}),\g_T\ov{\bm w}
\rangle_{H^{-\frac{1}{2}}_{\Div}\times H^{-\frac{1}{2}}_{\Curl}}\\
&&\qquad+\frac{{\rm i}\ka}{b_2}\langle\g_t\ov{\bm w},\g_T{\bm u}
\rangle_{H^{-\frac{1}{2}}_{\Div}\times H^{-\frac{1}{2}}_{\Curl}}
=-\frac{{\rm i}\ka}{b_2}\langle{\bm f}_2,\g_T\ov{\bm w}
\rangle_{H^{-\frac{1}{2}}_{\Div}\times H^{-\frac{1}{2}}_{\Curl}}.
\enn
Adding the above two equations together and letting $X=H(\curl,B_R\se\ov{D})$ and $Q={\bm H}^1(D)$,
we obtain the variational formulation of (\ref{3.3a}): find $({\bm u},{\bm H})\in Q\times X$ such that
\be\label{3.4a}
\mathcal{A}(({\bm u},{\bm H}),({\bm v},{\bm w}))=\mathcal{F}({\bm v},{\bm w})\;\;\;\;\;
\forall ({\bm v},{\bm w})\in Q\times X,
\en
where
\ben
\mathcal{A}(({\bm u},{\bm H}),({\bm v},{\bm w}))
&:=&\frac{-{\rm i}\ka}{b_1\ov{b}_2}\Int_D(\mathcal{E}({\bm u},\ov{{\bm v}})
-\rho\om^2{\bm u}\cdot\ov{\bm v}){\,\rm d}x
+\frac{{\rm i}\ka}{\ov{b}_2}\langle\g_t{\bm H},\g_T\ov{\bm v}
\rangle_{H^{-\frac{1}{2}}_{\Div}\times H^{-\frac{1}{2}}_{\Curl}}\\
&&-\Int_{B_R\setminus\ov{D}}(\curl{\bm H}\cdot\curl\ov{{\bm w}}-\ka^2{\bm H}\cdot\ov{{\bm w}}){\,\rm d}x\\
&&-{\rm i}\ka\langle G_e(\hat{x}\ti{\bm H}),\g_T\ov{\bm w}
\rangle_{H^{-\frac{1}{2}}_{\Div}\times H^{-\frac{1}{2}}_{\Curl}}
+\frac{{\rm i}\ka}{b_2}\langle\g_t\ov{\bm w},\g_T{\bm u}
\rangle_{H^{-\frac{1}{2}}_{\Div}\times H^{-\frac{1}{2}}_{\Curl}},\\
\mathcal{F}(({\bm v},{\bm w}))&:=&\frac{-{\rm i}\ka}{b_1\ov{b}_2}\langle {\bm f}_1,\ov{\bm v}
\rangle_{{\bm H}^{-\frac{1}{2}}\times {\bm H}^{\frac{1}{2}}}
-\frac{{\rm i}\ka}{b_2}\langle{\bm f}_2,\g_T\ov{\bm w}
\rangle_{H^{-\frac{1}{2}}_{\Div}\times H^{-\frac{1}{2}}_{\Curl}}.
\enn
We now split the sesquilinear form $\mathcal{A}((\cdot,\cdot),(\cdot,\cdot))$ on $(Q\times X)\times(Q\times X)$ 
into two parts:
\be\label{3.6b}
\mathcal{A}(({\bm u},{\bm H}),({\bm v},{\bm w})):={\bm A}(({\bm u},{\bm H}),({\bm v},{\bm w}))
+{\bm K}(({\bm u},{\bm H}),({\bm v},{\bm w}))
\en
with ${\bm A}((\cdot,\cdot),(\cdot,\cdot))$ and ${\bm K}((\cdot,\cdot),(\cdot,\cdot))$ defined as follows:
\ben
{\bm A}(({\bm u},{\bm H}),({\bm v},{\bm w}))
&:=&\frac{-{\rm i}\ka}{b_1\ov{b}_2}\Int_D(\mathcal{E}({\bm u},\ov{{\bm v}})
+M_1{\bm u}\cdot\ov{\bm v}){\,\rm d}x+\frac{{\rm i}\ka}{\ov{b}_2}\langle\g_t{\bm H},\g_T\ov{\bm v}
\rangle_{H^{-\frac{1}{2}}_{\Div}\times H^{-\frac{1}{2}}_{\Curl}}\\
&&-\Int_{B_R\se\ov{D}}(\curl{\bm H}\cdot\curl\ov{{\bm w}}-\ka^2{\bm H}\cdot\ov{{\bm w}}){\,\rm d}x\\
&&-{\rm i}\ka\langle G_e(\hat{x}\times{\bm H}),\g_T\ov{\bm w}
\rangle_{H^{-\frac{1}{2}}_{\Div}\times H^{-\frac{1}{2}}_{\Curl}}
+\frac{{\rm i}\ka}{b_2}\langle\g_t\ov{\bm w},\g_T{\bm u}
\rangle_{H^{-\frac{1}{2}}_{\Div}\times H^{-\frac{1}{2}}_{\Curl}},\\
{\bm K}(({\bm u},{\bm H}),({\bm v},{\bm w}))
&:=&\frac{{\rm i}\ka}{b_1\ov{b}_2}\Int_D(\rho\om^2+M_1){\bm u}\cdot\ov{\bm v}{\,\rm d}x
\enn
for all $({\bm u},{\bm H}),({\bm v},{\bm w})\in Q\ti X$, where $M_1>0$ is a constant 
chosen such that $M_1>\om^2\|\rho\|_{L_{\infty}}$.
Further, the sesquilinear form ${\bm A}((\cdot,\cdot),(\cdot,\cdot))$ can be written as 
\ben
{\bm A}(({\bm u},{\bm H}), ( {\bm v},{\bm w}))
&=&{\bm A}_2({\bm u},{\bm v})-{\bm A}_1({\bm H},{\bm w})
+\frac{{\rm i}\ka}{\ov{b}_2}\langle\g_t{\bm H},\g_T\ov{\bm v}
\rangle_{H^{-\frac{1}{2}}_{\Div}\times H^{-\frac{1}{2}}_{\Curl}}\\
&&+\frac{{\rm i}\ka}{b_2}\langle\g_t\ov{\bm w},\g_T{\bm u}
\rangle_{H^{-\frac{1}{2}}_{\Div}\times H^{-\frac{1}{2}}_{\Curl}}
\enn
for all $({\bm u},{\bm H}),({\bm v},{\bm w})\in Q\times X$, where
\ben
&&{\bm A}_1({\bm H},{\bm w}):=\Int_{B_R\se\ov{D}}(\curl{\bm H}\cdot\curl\ov{\bm w}
-\ka^2{\bm H}\cdot\ov{\bm w}){\,\rm d}x+{\rm i}\ka\langle G_e(\nu\times{\bm H}),\g_T\ov{\bm w}
\rangle_{H^{-\frac{1}{2}}_{\Div}\times H^{-\frac{1}{2}}_{\Curl}},\\
&&{\bm A}_2({\bm u},{\bm v}):=\frac{-{\rm i}\ka}{b_1\ov{b}_2}\Int_D(\mathcal{E}({\bm u},\ov{{\bm v}})
 +M_1{\bm u}\cdot\ov{\bm v}){\,\rm d}x.
\enn
Note that ${\bm A}_1(\cdot,\cdot)$ corresponds to the magnetic field ${\bm H}$ 
and ${\bm A}_2(\cdot,\cdot)$ corresponds to the elastic field ${\bm u}$. 
It is easy to see that ${\bm A}_2(\cdot,\cdot)$ is coercive in ${\bm H}^1(D)\times{\bm H}^1(D)$. 
However, it is difficult to directly analyze ${\bm A}_1(\cdot,\cdot)$ in 
$H(\curl,B_R\se\ov{D})\times H(\curl,B_R\se\ov{D})$ since the imbedding map of 
$H(\curl,B_R\se\ov{D})\hookrightarrow{\bm L}^2 (B_R\se\ov{D})$ is not compact. 
To overcome this difficulty, we introduce a Helmholtz-type decomposition for ${\bm A}_1(\cdot,\cdot)$. 
To this end, define the scalar space
\ben
S:= \left\{\psi \in H^1(B_R\se\ov{D}):\Int_{S_R}\psi{\,\rm d}s=0\right\},
\enn
which is clearly a closed linear subspace of $H^1(B_R\se\ov{D})$ and thus a Hilbert space.
Then the sesquilinear form ${\bm A}_1(\cdot,\cdot)$ can be rewritten on $S$ as
\be\no
{\bm A}_1(\na\phi,\na\psi):=a(\phi,\psi)+b(\phi,\psi)\quad{\rm for\;all\;}\;\phi,\;\psi\in S,
\en
where $a(\cdot,\cdot)$ and $b(\cdot,\cdot)$ are defined as
\ben
a(\phi,\psi)&:=&-\ka^2\langle\na\phi,\na\ov{\psi}\rangle
+\ka^2\langle\wi{G}_e(\hat{x}\times\na\phi),\na_{S_R}\ov{\psi}
\rangle_{H^{-\frac{1}{2}}_{\Div}\times H^{-\frac{1}{2}}_{\Curl}},\\
b(\phi,\psi)&:=&{\rm i}\ka\langle(G_e+{\rm i}\ka\wi{G}_e)(\hat{x}\times\na\phi),\na_{S_R} \ov{\psi}
\rangle_{H^{-\frac{1}{2}}_{\Div}\times H^{-\frac{1}{2}}_{\Curl}}.
\enn
We have the following result which was proved in \cite{P03}.

\begin{lemma}\label{lem3.4}  
$a(\cdot,\cdot)$ is bounded and elliptic on $S\times S$, and there exists a compact operator $K_1$ on $S$ 
such that $b(\phi,\psi)=a(K_1\phi,\psi)$ for all $\phi,\psi\in S$. Further, $I+K_1$ is an isomorphism on $S$.
\end{lemma}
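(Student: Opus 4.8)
The plan is to treat the three assertions separately. Boundedness and ellipticity of $a$ will follow from trace estimates together with Lemma~\ref{lem3.3}(b); the operator $K_1$ will be produced by the Lax--Milgram theorem; its compactness will come from factoring the form $b$ through the compact operator in Lemma~\ref{lem3.3}(a); and the isomorphism property of $I+K_1$ will reduce, via the Fredholm alternative, to injectivity, which is the one place where a genuine PDE argument (a Rellich--type uniqueness for the exterior Maxwell system) is needed. Throughout $\ka=\om\sqrt{\vep_0\mu_0}>0$, and I would use repeatedly that for $\phi\in S$ one has $\na\phi\in H(\curl,B_R\se\ov D)$ with $\curl\na\phi=0$, so that $\phi\mapsto\g_t\na\phi=\nu\times\na\phi$ maps $S$ boundedly into $H^{-1/2}_{\Div}(S_R)$ and $\phi\mapsto\g_T\na\phi=\na_{S_R}\phi$ maps $S$ boundedly into $H^{-1/2}_{\Curl}(S_R)$.

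Boundedness of $a$ on $S\times S$ is then immediate from these two bounds and the boundedness of $\wi G_e$ on $H^{-1/2}_{\Div}(S_R)$ and of the duality pairing. For ellipticity I would set $\la:=\nu\times\na\phi$ and note that $\na_{S_R}\ov\phi=(\nu\times\na\ov\phi)\times\nu=\ov\la\times\hat x$ on $S_R$, so that by Lemma~\ref{lem3.3}(b) the quantity $\ka^2\langle\wi G_e\la,\na_{S_R}\ov\phi\rangle$ is a real number $\le 0$; hence
\ben
a(\phi,\phi)=-\ka^2\|\na\phi\|_{{\bm L}^2(B_R\se\ov D)}^2+\ka^2\langle\wi G_e\la,\ov\la\times\hat x\rangle\le-\ka^2\|\na\phi\|_{{\bm L}^2(B_R\se\ov D)}^2\le0.
\enn
A Poincar\'{e}--Friedrichs inequality on $S$ — valid because $B_R\se\ov D$ is connected and $\psi\mapsto\int_{S_R}\psi\,ds$ does not annihilate the constants — gives $\|\na\phi\|_{{\bm L}^2}^2\ge c_P\|\phi\|_{H^1(B_R\se\ov D)}^2$, so $|a(\phi,\phi)|=-a(\phi,\phi)\ge\ka^2 c_P\|\phi\|_S^2$. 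Thus $-a$ is bounded and coercive on $S\times S$, which is the asserted ellipticity of $a$.

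Because $-a$ is bounded and coercive, Lax--Milgram yields a bounded linear solution operator $L\colon S'\to S$. For fixed $\phi$ the map $\psi\mapsto b(\phi,\psi)$ belongs to $S'$ (same trace estimates, now using that $G_e$ and $\wi G_e$ are bounded on $H^{-1/2}_{\Div}(S_R)$), so I would define $K_1\phi\in S$ as the unique solution of $a(K_1\phi,\psi)=b(\phi,\psi)$ for all $\psi\in S$; uniqueness makes $K_1$ linear and the Lax--Milgram estimate makes it bounded. For compactness I would write $K_1=L\circ T_b$ with $T_b\phi:=b(\phi,\cdot)$, and observe that $T_b$ factors as $\phi\mapsto\nu\times\na\phi\in H^{-1/2}_{\Div}(S_R)$ (bounded), then $G_e+{\rm i}\ka\wi G_e$ (compact on $H^{-1/2}_{\Div}(S_R)$ by Lemma~\ref{lem3.3}(a)), then $\mu\mapsto\big(\psi\mapsto{\rm i}\ka\langle\mu,\na_{S_R}\ov\psi\rangle\big)\in S'$ (bounded); hence $T_b$, and therefore $K_1$, is compact. (That $a(\phi,\psi)+b(\phi,\psi)={\bm A}_1(\na\phi,\na\psi)$, the two $\wi G_e$-terms cancelling, is already recorded in the text.)

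It then remains only to show $I+K_1$ is injective, being a compact perturbation of the identity. If $(I+K_1)\phi=0$, then $K_1\phi=-\phi$ gives $a(\phi,\psi)+b(\phi,\psi)=0$, i.e.\ ${\bm A}_1(\na\phi,\na\psi)=0$ for all $\psi\in S$, that is $-\ka^2\langle\na\phi,\na\ov\psi\rangle_{{\bm L}^2(B_R\se\ov D)}+{\rm i}\ka\langle G_e(\nu\times\na\phi),\na_{S_R}\ov\psi\rangle=0$. Let $\wi{\bm w}$ be the radiating solution in $\R^3\se\ov{B_R}$ appearing in the definition of $G_e(\la)$ with $\la:=\nu\times\na\phi$; since $\g_t\wi{\bm w}=\la=\g_t\na\phi$ on $S_R$, also $\g_T\wi{\bm w}=\g_T\na\phi=\na_{S_R}\phi$ there. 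Taking $\psi=\phi$ and then imaginary parts gives $\Rt\langle G_e\la,\g_T\ov{\wi{\bm w}}\rangle=0$; but $G_e\la=\frac{1}{{\rm i}\ka}\hat x\times\curl\wi{\bm w}$, and Green's identity in $B_\rho\se\ov{B_R}$ together with the Silver-M\"{u}ller condition (letting $\rho\to\infty$) yields the energy identity $\Rt\langle G_e\la,\g_T\ov{\wi{\bm w}}\rangle=\pm\|\wi{\bm w}_\infty\|^2_{{\bm L}^2(\Sp^2)}$, with $\wi{\bm w}_\infty$ the far-field pattern of $\wi{\bm w}$, so $\wi{\bm w}_\infty=0$; by Rellich's lemma $\wi{\bm w}\equiv0$ in $\R^3\se\ov{B_R}$, hence $\la=\g_t\wi{\bm w}|_{S_R}=0$ and $G_e\la=0$. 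Feeding $G_e\la=0$ back into ${\bm A}_1(\na\phi,\na\psi)=0$ gives $\langle\na\phi,\na\ov\psi\rangle_{{\bm L}^2(B_R\se\ov D)}=0$ for all $\psi\in S$, so $\na\phi=0$; then $\phi$ is constant on the connected set $B_R\se\ov D$, and $\int_{S_R}\phi\,ds=0$ forces $\phi=0$. This last step is the one I expect to be the main obstacle — concretely the energy identity, where one must keep careful track of the orientation of $S_R$ and of the sign in the radiation condition and verify that the only surviving boundary contribution on $S_\rho$ as $\rho\to\infty$ is the definite-sign far-field term; everything else (the two trace bounds, the Poincar\'{e}--Friedrichs inequality on $S$, Lax--Milgram, and parts (a)--(b) of Lemma~\ref{lem3.3}) is routine. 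An alternative to the Rellich argument would be to extend $\na\phi$ to an $H(\curl,B_R)$-field and exploit the splitting $G_e=G_e^1+G_e^2$ of Lemma~\ref{lem3.3}(c), but the route above looks cleaner.
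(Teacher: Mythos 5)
Your argument is correct, and it is worth noting that the paper itself does not prove this lemma at all: it simply cites \cite{P03} (Monk's book), where the result is established for the exterior Maxwell problem by essentially the argument you give. Your reconstruction matches that standard route: negative-definiteness of $\wi G_e$ from Lemma \ref{lem3.3}(b) plus a Poincar\'e--Friedrichs inequality on the mean-zero space $S$ gives coercivity of $-a$; Lax--Milgram defines $K_1$ and the factorization of $b$ through the compact operator $G_e+{\rm i}\ka\wi G_e$ of Lemma \ref{lem3.3}(a) gives its compactness; and injectivity of $I+K_1$ follows from the Rellich/far-field energy identity for the exterior field $\wi{\bm w}$ underlying $G_e$. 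All the identifications you rely on check out: $\na_{S_R}\ov\phi=\ov\la\times\hat x$ with $\la=\hat x\times\na\phi$, so Lemma \ref{lem3.3}(b) applies verbatim (and, being stated as a strict inequality, already tells you the pairing is real); the $\wi G_e$-terms in $a+b$ do cancel to give ${\bm A}_1(\na\phi,\na\psi)$; and testing against $\na\psi$ for $\psi\in S$ loses nothing since constants have zero gradient.

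Two small points of care, neither of which is a gap. First, in the injectivity step the sign in the energy identity is not actually needed: once $\Rt\langle G_e\la,\g_T\ov{\wi{\bm w}}\rangle=0$ you get $\|\wi{\bm w}_\infty\|_{{\bm L}^2(\Sp^2)}=0$ whichever sign the identity carries, so your ``$\pm$'' is harmless (for the record, with the paper's convention $\lim r(\hat x\times\curl\wi{\bm w}+{\rm i}\ka\wi{\bm w})=0$ one gets $\ka\,\I\int_{S_R}(\hat x\times\curl\wi{\bm w})\cdot\ov{\wi{\bm w}}\,ds=-\ka^2\|\wi{\bm w}_\infty\|^2$, hence the nonpositive sign). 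Second, the coercivity constant you obtain is in the $H^1$-norm of $\phi$, whereas the paper later quotes (\ref{3.6a}) in the $X$-norm of $\na\phi$; since $\|\na\phi\|_X=\|\na\phi\|_{{\bm L}^2}\le\|\phi\|_{H^1}$ your stronger statement implies theirs, so this is only a matter of bookkeeping.
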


In order to analyze ${\bm A}_1(\cdot,\cdot)$ on $X\times X$, we introduce the following subspace of $X$:
\be\no
X_0&:=&\{{\bm H}\in X:-\ka^2\langle{\bm H},\na\ov{\psi}\rangle
+{\rm i}\ka\langle G_e(\hat{x}\times{\bm H}),\na_{S_R}\ov{\psi}
\rangle_{H^{-\frac{1}{2}}_{\Div}\times H^{-\frac{1}{2}}_{\Curl}}=0\;\;\forall\psi\in S\}\\ \no
&=&\{{\bm H}\in X:\na\cdot{\bm H}=0\;\;{\rm in}\;B_R\se\ov{D},\;\;
-\ka^2\hat{x}\cdot{\bm H}={\rm i}\ka\na_{S_R}\cdot G_e(\hat{x}\times{\bm H})\;{\rm on}\;S_R\;\;\;\\ \label{3.6cc}
&& {\rm and}\;\;\nu\cdot{\bm H}=0\;\;{\rm on}\;\;\pa D\}.
\en
We then have the following Helmholtz-type decomposition for $X$.

\begin{lemma}\label{lem3.5}
$\na S$ and $X_0$ are closed linear subspaces of $X$, and $X=X_0\bigoplus\na S$ is the direct sum 
of $\na S$ and $X_0$. Further, there exist constants $c_1,c_2>0$ such that
\be\label{3.6aa}
c_1\|{\bm w}+\na\phi\|^2_{X}\le\|{\bm w}\|^2_{X}+\|\na\phi\|^2_{X}\le c_2\|{\bm w}+\na\phi\|^2_{X}
\en
for all ${\bm w}\in X_0$ and $\phi\in S$.
\end{lemma}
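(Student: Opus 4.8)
The plan is to verify, in turn, that $\na S$ is closed in $X$, that $X_0$ is closed in $X$, that the sum $X_0+\na S$ is direct and exhausts $X$, and finally the two-sided bound (\ref{3.6aa}); the decisive input is the structure of $a(\cdot,\cdot)$, $b(\cdot,\cdot)$ and $I+K_1$ provided by Lemma \ref{lem3.4}. For the first claim, note that $\curl\na\phi=0$, so $\na\phi\in X$ with $\|\na\phi\|_X=\|\na\phi\|_{{\bm L}^2(B_R\se\ov{D})}$, and the normalisation $\int_{S_R}\psi\,ds=0$ removes the constants; a Poincar\'e--Friedrichs inequality on the bounded Lipschitz domain $B_R\se\ov{D}$ then gives $\|\na\phi\|_{{\bm L}^2}\ge c\|\phi\|_{H^1}$ for $\phi\in S$, so $\na\colon S\to X$ is bounded and bounded below and hence has closed range. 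For the second claim, $X_0$ is the intersection over $\psi\in S$ of the kernels of the functionals ${\bm H}\mapsto-\ka^2\langle{\bm H},\na\ov{\psi}\rangle+{\rm i}\ka\langle G_e(\hat x\times{\bm H}),\na_{S_R}\ov{\psi}\rangle$, each of which is bounded on $X$ by the continuity of the trace $\g_t\colon H(\curl,\cdot)\to H^{-1/2}_{\Div}(S_R)$, of $G_e$ on $H^{-1/2}_{\Div}(S_R)$, of the surface gradient from $H^{1/2}(S_R)$ into $H^{-1/2}_{\Curl}(S_R)$, and of the $H^{-1/2}_{\Div}\times H^{-1/2}_{\Curl}$ pairing; an intersection of closed subspaces is closed.

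For the direct sum, the key point is the algebraic identity obtained by expanding $a$ and $b$ and using ${\rm i}\ka\cdot{\rm i}\ka=-\ka^2$, which makes the $\wi{G}_e$-terms cancel:
\[
(a+b)(\phi,\psi)=-\ka^2\langle\na\phi,\na\ov{\psi}\rangle+{\rm i}\ka\langle G_e(\hat x\times\na\phi),\na_{S_R}\ov{\psi}\rangle,\qquad\phi,\psi\in S.
\]
Hence, for ${\bm H}\in X$ and $\phi\in S$, the inclusion ${\bm H}-\na\phi\in X_0$ is (by linearity of $G_e$) equivalent to $(a+b)(\phi,\psi)=L_{\bm H}(\psi)$ for all $\psi\in S$, where $L_{\bm H}(\psi):=-\ka^2\langle{\bm H},\na\ov{\psi}\rangle+{\rm i}\ka\langle G_e(\hat x\times{\bm H}),\na_{S_R}\ov{\psi}\rangle$ is a bounded antilinear functional on $S$ by the same estimates as above. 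By Lemma \ref{lem3.4} this reads $a((I+K_1)\phi,\psi)=L_{\bm H}(\psi)$ for all $\psi$, and since $a$ is bounded and elliptic on $S\times S$, the Lax--Milgram theorem yields a unique $\chi\in S$ with $a(\chi,\cdot)=L_{\bm H}$; then $\phi:=(I+K_1)^{-1}\chi\in S$, which is legitimate since $I+K_1$ is an isomorphism, produces the splitting ${\bm H}=({\bm H}-\na\phi)+\na\phi$ with ${\bm H}-\na\phi\in X_0$, so $X=X_0+\na S$. To see directness, let $\na\phi\in X_0$ with $\phi\in S$; taking $\psi=\phi$ in the identity above gives $a((I+K_1)\phi,\psi)=0$ for all $\psi\in S$, and choosing $\psi=(I+K_1)\phi$ together with the ellipticity of $a$ forces $(I+K_1)\phi=0$, hence $\phi=0$. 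Thus $X_0\cap\na S=\{0\}$ and $X=X_0\oplus\na S$.

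Finally, (\ref{3.6aa}) follows from general facts about complemented subspaces of a Banach space. The left inequality holds with $c_1=1/2$, directly from $\|{\bm w}+\na\phi\|_X^2\le 2\|{\bm w}\|_X^2+2\|\na\phi\|_X^2$. For the right inequality, the projections $P_0\colon X\to X_0$ and $P_1\colon X\to\na S$ determined by the direct sum have closed graphs (because $X_0$ and $\na S$ are closed), hence are bounded by the closed graph theorem, and then $\|{\bm w}\|_X^2+\|\na\phi\|_X^2=\|P_0({\bm w}+\na\phi)\|_X^2+\|P_1({\bm w}+\na\phi)\|_X^2\le c_2\|{\bm w}+\na\phi\|_X^2$ with $c_2=\|P_0\|^2+\|P_1\|^2$. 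I expect the direct-sum step to be the main obstacle: the crux is spotting the cancellation identity that links the defining relation of $X_0$ to $a+b$ and then invoking the ellipticity of $a$ and the invertibility of $I+K_1$ from Lemma \ref{lem3.4}; by contrast, the continuity bookkeeping behind the closedness of $X_0$ and the boundedness of $L_{\bm H}$ is routine once the correct tangential trace spaces on $S_R$ are used.
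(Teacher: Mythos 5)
Your proof is correct and follows essentially the same route as the paper: closedness of $X_0$ as an intersection of kernels of bounded functionals, construction of $\phi\in S$ solving ${\bm A}_1(\na\phi,\na\psi)={\bm A}_1({\bm H},\na\psi)$ via Lemma \ref{lem3.4} (you make explicit the identity $a+b={\bm A}_1(\na\cdot,\na\cdot)$, the Lax--Milgram step and the use of $(I+K_1)^{-1}$, which the paper leaves implicit), triviality of the intersection from the ellipticity of $a$, and boundedness of the projections for (\ref{3.6aa}). Your Poincar\'e--Friedrichs argument for the closedness of $\na S$ is in fact a welcome sharpening, since boundedness of $\na$ alone (as stated in the paper) does not by itself give closed range.
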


\begin{proof}
The closeness of $\na S$ follows from the property that $\curl\na\psi=0$ for $\psi\in X$ and 
the boundedness of the differential operator $\na$ from $H^1(\cdot)$ into $L^2(\cdot)$. 
For a fixed $\psi\in S$, the linear functionals ${\bm H}_0\to\langle{\bm H}_0,\na\ov{\psi}\rangle$ 
and ${\bm H}_0\to\left\langle G_e(\hat{x}\times{\bm H}_0),\na_{S_R}\ov{\psi}\right
\rangle_{H^{-\frac{1}{2}}_{\Div}\times H^{-\frac{1}{2}}_{\Curl}}$ are bounded on $X$, 
yielding that $X_0$ is closed.
	
Given ${\bm H}\in X$, we now construct a function $\phi\in S$ such that
\be\label{3.6aaa}
{\bm A}_1(\na\phi,\na\psi)={\bm A}_1({\bm H},\na\psi)\quad {\rm for\;all\;}\psi\in S.
\en
From Lemma \ref{lem3.4} it follows that such $\phi$ is well defined and satisfies that 
\ben
\|\na\phi\|_{{\bm L}^2(B_R\se\ov{D})}\le c\|{\bm H}\|_{H(\curl,B_R\se\ov{D})}
\enn
for some constant $c>0$. Let ${\bm w}={\bm H}-\na\phi$. Then, and by (\ref{3.6aaa}) and 
the definition of ${\bm A}_1(\cdot,\cdot)$ we deduce that ${\bm w}\in X_0$.
It remains to show that the intersection $\na S\cap X_0$ contains only a trivial element. 
In fact, if there exists $\phi\in S$ such that $\na\phi\in X_0$, then 
\ben
{\bm A}_1(\na\phi,\na\psi)=0\quad {\rm for\;all\;}\psi\in S,
\enn
implying that $\phi=0$.
	
Finally, the inequality (\ref{3.6aa}) follows from the boundedness of the projection operators 
$X\hookrightarrow\na S$ and $X\hookrightarrow X_0$.
\end{proof}

\begin{lemma}\label{lem3.6}
$X_0 $ is compactly imbedded in ${\bm L}^2(B_R\se\ov{D})$.
\end{lemma}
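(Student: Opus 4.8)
The plan is to argue by sequential compactness. Let $({\bm H}_n)_{n\in\N}$ be a bounded sequence in $X_0$; we must extract a subsequence that converges in ${\bm L}^2(B_R\se\ov{D})$. The crucial observation is that the three constraints defining $X_0$ in $(\ref{3.6cc})$ are exactly what is needed in order to glue each ${\bm H}_n$ across $S_R$ to the associated exterior radiating field so that the resulting field, defined on the larger annular domain $\Om':=B_{R'}\se\ov{D}$ (fix any $R'>R$), belongs to $H(\curl,\Om')\cap H(\dive,\Om')$ and is divergence free. The statement then reduces to a compactness property on $\Om'$ together with control of the normal traces on $\pa\Om'=\pa D\cup S_{R'}$.

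Concretely, for ${\bm H}\in X_0$ let $\wi{{\bm w}}$ be the radiating solution of $\curl\curl\wi{{\bm w}}-\ka^2\wi{{\bm w}}=0$ in $\R^3\se\ov{B_R}$ with $\hat x\ti\wi{{\bm w}}=\hat x\ti{\bm H}$ on $S_R$ — the field already appearing in the definition $(\ref{3.2aa})$ of $G_e$ — and set $\widehat{{\bm H}}:={\bm H}$ in $B_R\se\ov{D}$ and $\widehat{{\bm H}}:=\wi{{\bm w}}$ in $B_{R'}\se\ov{B_R}$. The two pieces share the same tangential trace on $S_R$, whence $\widehat{{\bm H}}\in H(\curl,\Om')$. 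For the normal traces, a short computation using $(\ref{3.2aa})$, the exterior equation $\curl\curl\wi{{\bm w}}=\ka^2\wi{{\bm w}}$ and the surface identity $\na_{S_R}\cdot(\hat x\ti{\bm V})=-\hat x\cdot\curl{\bm V}$ shows that the Calder\'{o}n-type relation in $(\ref{3.6cc})$ on $S_R$ is equivalent to $\hat x\cdot{\bm H}=\hat x\cdot\wi{{\bm w}}$ on $S_R$; hence $\widehat{{\bm H}}\in H(\dive,\Om')$ with $\dive\widehat{{\bm H}}=0$ (the inner piece being divergence free, and $\dive\wi{{\bm w}}=0$ following from the Maxwell equation). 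By well-posedness of the exterior problem (recall $\ka\in\R_+$), $\|\wi{{\bm w}}\|_{H(\curl,B_{R'}\se\ov{B_R})}\le C\|{\bm H}\|_X$, so $\widehat{{\bm H}}$ is bounded in $H(\curl,\Om')$ by $C\|{\bm H}\|_X$. Moreover, since $\dive\wi{{\bm w}}=0$ the components of $\wi{{\bm w}}$ solve the Helmholtz equation, so interior elliptic regularity on an annulus $\ov{B_{R'}}\se B_{R_1}$ with $R<R_1<R'$, together with the bound above, yields $\|\wi{{\bm w}}\|_{{\bm H}^2(\ov{B_{R'}}\se B_{R_1})}\le C\|{\bm H}\|_X$; in particular $\hat x\cdot\widehat{{\bm H}}|_{S_{R'}}$ is bounded in $H^{3/2}(S_{R'})$, whereas $\nu\cdot\widehat{{\bm H}}=0$ on $\pa D$.

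It therefore remains to establish the following elementary fact: a bounded sequence $({\bm G}_n)$ in $\{{\bm G}\in H(\curl,\Om'):\dive{\bm G}=0\ {\rm in}\ \Om'\}$ whose normal traces $\nu\cdot{\bm G}_n$ are precompact in $H^{-1/2}(\pa\Om')$ has a subsequence converging in ${\bm L}^2(\Om')$. I would prove this via a Hodge-type decomposition ${\bm G}_n=\na p_n+{\bm z}_n$, where $p_n\in H^1(\Om')$, normalised by $\int_{\Om'}p_n{\,\rm d}x=0$, solves the Neumann problem $\Delta p_n=0$ in $\Om'$, $\pa_\nu p_n=\nu\cdot{\bm G}_n$ on $\pa\Om'$ (the compatibility condition $\int_{\pa\Om'}\nu\cdot{\bm G}_n{\,\rm d}s=\int_{\Om'}\dive{\bm G}_n{\,\rm d}x=0$ being satisfied). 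Since the solution operator of this Neumann problem is bounded from the mean-zero subspace of $H^{-1/2}(\pa\Om')$ into $H^1(\Om')/\C$, it maps the precompact family $\{\nu\cdot{\bm G}_n\}$ onto a precompact family, so $(\na p_n)$ is precompact in ${\bm L}^2(\Om')$. The remainder ${\bm z}_n={\bm G}_n-\na p_n$ satisfies $\dive{\bm z}_n=0$, $\nu\cdot{\bm z}_n=0$ on $\pa\Om'$ and $\curl{\bm z}_n=\curl{\bm G}_n$, so $({\bm z}_n)$ is bounded in $\{{\bm z}\in H(\curl,\Om'):\dive{\bm z}=0,\ \nu\cdot{\bm z}=0\ {\rm on}\ \pa\Om'\}$, which is compactly imbedded in ${\bm L}^2(\Om')$ by the classical Maxwell compactness property on bounded Lipschitz domains; hence $({\bm z}_n)$, and therefore $({\bm G}_n)$, is precompact in ${\bm L}^2(\Om')$.

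Applying this with ${\bm G}_n=\widehat{{\bm H}}_n$ — whose normal traces on $\pa\Om'$ are precompact in $H^{-1/2}(\pa\Om')$ because they vanish on $\pa D$ and are bounded in $H^{3/2}(S_{R'})$, which imbeds compactly into $H^{-1/2}(S_{R'})$ — produces a subsequence of $(\widehat{{\bm H}}_n)$, hence of $({\bm H}_n)$ after restriction to $B_R\se\ov{D}$, that converges in ${\bm L}^2(B_R\se\ov{D})$; this proves the lemma. I expect the only genuinely delicate step to be the verification that the glued field $\widehat{{\bm H}}$ is divergence free across $S_R$, since this is precisely where the specific form of the Calder\'{o}n-type constraint built into $X_0$ enters; the remaining ingredients — well-posedness and interior regularity of the exterior Maxwell problem, continuity of the Neumann solution operator, and the classical Maxwell compactness property — are standard. (Alternatively, one could quote directly a compact-imbedding result for the Maxwell scattering setting with a Calder\'{o}n operator, thereby avoiding the explicit extension.)
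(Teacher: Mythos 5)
Your argument is correct, and its core construction is the same as the paper's: you glue ${\bm H}$ to the exterior radiating field $\wi{{\bm w}}$ across $S_R$ and check, via $\na_{S_R}\cdot(\hat x\ti{\bm V})=-\hat x\cdot\curl{\bm V}$ and $\curl\curl\wi{{\bm w}}=\ka^2\wi{{\bm w}}$, that the Calder\'{o}n-type constraint in $(\ref{3.6cc})$ forces $\hat x\cdot{\bm H}=\hat x\cdot\wi{{\bm w}}$ on $S_R$, so the extended field is divergence free with vanishing normal trace on $\pa D$ --- this is precisely the computation $(\ref{3.8aa})$ in the paper. Where you genuinely differ is in how compactness is then extracted. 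The paper cites Theorem 3.50 of Monk's book to conclude that the extended field lies in $H^{1/2+s}_{loc}(\R^3\se\ov{D})$ and then applies Rellich; you instead truncate to the bounded domain $B_{R'}\se\ov{D}$, use interior elliptic regularity of the divergence-free (hence componentwise Helmholtz) exterior field to get precompactness of the normal traces on the artificial boundary $S_{R'}$, and then perform the Hodge splitting ${\bm G}_n=\na p_n+{\bm z}_n$: the gradient part is precompact because the Neumann solution operator is continuous on the precompact family of normal traces, and the remainder is precompact by the classical Maxwell compactness property for divergence-free fields with vanishing normal trace on a bounded Lipschitz domain. Your route is more self-contained and makes explicit exactly where the normal-trace information enters, at the cost of the extra bookkeeping on $S_{R'}$; the paper's route is shorter but rests entirely on the cited fractional-regularity theorem (whose hypotheses are, in the end, the same data you verify: membership in $H(\curl)\cap H(\dive)$ with controlled normal traces). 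Both arguments are valid.
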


\begin{proof}
Since $X_0$ is a Hilbert space, it is enough to show that ${\bm u}_j\to 0$ in ${\bm L}^2(B_R\se\ov{D})$ 
as $j\to\infty$ if $\{{\bm u}_j\}_{j\in\N}\subset X_0$ and
${\bm u}_j\rightharpoonup 0$ in the weak sense as $j\to\infty$.
	
For each $j\in\N$, define ${\bm v}_j\in H_{loc}(\curl,\R^3\se\ov{B_R})$ which satisfies that 
\ben
\left\{
\begin{array}{ll}
\curl\curl {\bm v}_j-\ka^2{\bm v}_j=0 &\qquad {\rm in}\;\;\mathbb{R}^3\setminus\ov{B_R},\\[1mm]
\hat{x}\times{\bm v}_j=\hat{x}\times{\bm u}_j &\qquad {\rm on}\;\;S_R,\\[1mm]
\ds\lim_{r\to\infty}r(\hat{x}\times\curl{\bm v}_j+{\rm i}\ka{\bm v}_j)=0& \qquad r=|x|.
\end{array}
\right.
\enn
For each $j\in\N$, define 
\ben
{\bm u}^e_j=\begin{cases}
{\bm u}_j  &\qquad{\rm in\;}\;\;B_R\setminus\ov{D},\\
{\bm v}_j  &\qquad{\rm in\;}\;\;\mathbb{R}^3\setminus\ov{B_R}. \\
\end{cases}
\enn
Then it is clear that ${\bm u}^e_j$ is the extension of ${\bm u}_j$ in the sense of $H(\curl,\cdot)$.
	
Recalling the definition of the space $X_0$, one has 
$-\ka^2\hat{x}\cdot{\bm u}_j={\rm i}\ka\na_{S_R}\cdot G_e(\hat{ x}\times{\bm u}_j)$ on $S_R$, 
which, combined with the definition of $G_e$ and the Maxwell equation for ${\bm v}_j$, gives
\be\no
\hat{x}\cdot{\bm u}_j&=&-\frac{\rm i}{\ka}\na_{S_R}\cdot G_e(\hat{ x}\times{\bm u}_j)
=-\frac{\rm i}{\ka}\na_{S_R}\cdot\left(\frac{1}{\rm i\ka}\hat{ x}\times\curl{\bm v}_j\right)\\
&=&-\frac{1}{\ka^2}\na_{S_R}\cdot(\hat{x}\times\curl{\bm v}_j)
=\frac{1}{\ka^2}\hat{ x}\cdot\curl\curl{\bm v}_j
=\hat{ x}\cdot{\bm v}_j\quad {\rm on\;\;} S_R.\;\; \label{3.8aa}
\en
Noting that $\na\cdot{\bm u}_j=0$ in $B_R\se\ov{D}$ and $\na\cdot{\bm v}_j=0$ in $\R^3\se\ov{B_R}$, 
we conclude from (\ref{3.8aa}) that the extended function ${\bm u}^e_j$ satisfies
\ben
\dive{\bm u}^e_j=0\quad {\rm in\;}\;\;\R^3\se\ov{D}\qquad{\rm and}\qquad
\nu\cdot{\bm u}^e_j=0\quad{\rm on\;}\;\;\pa D.
\enn
Then it follows from Theorem 3.50 of \cite{P03} that ${\bm u}_j^e\in H^{1/2+s}_{loc}(\R^3\se\ov{D})$ 
for some $s\geq0$. By the compactness of the imbedding $H^{1/2}(B_R\se\ov{D})\hookrightarrow L^2(B_R\se\ov{D})$,
there is a subsequence of $\{{\bm u}_j\}$ converging to $0$ in ${\bm L}^2(B_R\se\ov{D})$. 
This completes the proof.
\end{proof}

We are now ready to analyze the sesquilinear form ${\bm A}_1(\cdot,\cdot)$ on $X_0$. 
First, for ${\bm H}$, ${\bm w}\in X$, by Lemma \ref{lem3.5} there exist
${\bm H}_0, {\bm w}_0\in X_0$ and $\phi,\psi\in S$ such that ${\bm H}={\bm H}_0+\na \phi$ 
and ${\bm w}={\bm w}_0+\na\psi$. Thus, by the definition of $X_0$ one has
\be\label{3.9aa}
{\bm A}_1({\bm H},{\bm w})={\bm A}_1({\bm H}_0,{\bm w}_0)+{\bm A}_1(\na \phi,{\bm w}_0)
+{\bm A}_1(\na\phi,\na\psi).
\en
We split ${\bm A}_1(\cdot,\cdot)$ into two parts:
\ben
{\bm A}_1({\bm H}_0,{\bm w}_0):=a_0({\bm H}_0,{\bm w}_0)
+b_0({\bm H}_0,{\bm w}_0)\qquad\forall\;{\bm H}_0,\;{\bm w}_0\in {X}_0,
\enn
where
\ben
a_0({\bm H}_0,{\bm w}_0)&:=&(\curl{\bm H}_0,\curl\ov{\bm w}_0)
+({\bm H}_0,\ov{\bm w}_0)+{\rm i}\ka\langle G_e^2(\hat{x}\times{\bm H}_0),\g_T\ov{\bm w}_0 
\rangle_{H^{-\frac{1}{2}}_{\Div}\times H^{-\frac{1}{2}}_{\Curl}}, \\
b_0({\bm H}_0,{\bm w}_0)&:=&-(\ka^2+1)({\bm H}_0,\ov{\bm w}_0)
+{\rm i}\ka\langle G_e^1(\hat{ x}\ti{\bm H}_0),\g_T\ov{\bm w}_0
\rangle_{H^{-\frac{1}{2}}_{\Div}\times H^{-\frac{1}{2}}_{\Curl}}.
\enn
Similar to Lemma \ref{lem3.4}, we have the following result for $a_0(\cdot,\cdot)$ 
and $b_0(\cdot,\cdot)$ on $X_0$.

\begin{lemma}\label{lem3.7}
$a_0(\cdot,\cdot)$ is coercive on $X_0\times X_0$, and there exists a compact operator $K_2$ 
on $X_0$ such that $b_0({\bm H}_0,{\bm w}_0)=a_0(K_2{\bm H}_0,{\bm w}_0)$ for all 
${\bm H}_0,{\bm w}_0\in X_0$. Further, $I+K_2$ is an isomorphism on $X_0$.
\end{lemma}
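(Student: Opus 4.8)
The plan is to mimic the structure of the proof of Lemma~\ref{lem3.4}, transferring the analysis of the full sesquilinear form $a(\cdot,\cdot)$ on the scalar space $S$ to the form $a_0(\cdot,\cdot)$ on the divergence-free space $X_0$. First I would establish boundedness and coercivity of $a_0(\cdot,\cdot)$ on $X_0\times X_0$. Boundedness is immediate from the continuity of the trace operator $\g_T$ from $H(\curl,B_R\se\ov D)$ into $H^{-1/2}_{\Curl}(\pa D)$ and the boundedness of $G_e^2$. For coercivity, note that $\Rt\,a_0({\bm H}_0,{\bm H}_0)=\|\curl{\bm H}_0\|^2_{{\bm L}^2}+\|{\bm H}_0\|^2_{{\bm L}^2}+\Rt\big({\rm i}\ka\langle G_e^2(\hat x\times{\bm H}_0),\g_T\ov{\bm H}_0\rangle\big)$, and by Lemma~\ref{lem3.3}(c.2) the last term is nonnegative; hence $\Rt\,a_0({\bm H}_0,{\bm H}_0)\ge\|{\bm H}_0\|^2_{H(\curl,B_R\se\ov D)}$, which together with the equivalence of norms on $X_0$ (from Lemma~\ref{lem3.5}) gives coercivity on $X_0$.

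Next I would identify the operator $K_2$. By coercivity and the Lax--Milgram theorem, $a_0(\cdot,\cdot)$ induces an isomorphism of $X_0$ onto its dual; since ${\bm H}_0\mapsto b_0({\bm H}_0,{\bm w}_0)$ is, for fixed ${\bm w}_0$, a bounded antilinear functional of ${\bm w}_0$ (by Riesz representation via the inner product induced by $a_0$), there is a unique bounded linear $K_2:X_0\to X_0$ with $b_0({\bm H}_0,{\bm w}_0)=a_0(K_2{\bm H}_0,{\bm w}_0)$ for all ${\bm H}_0,{\bm w}_0\in X_0$. The main work is to prove that $K_2$ is compact. Writing out $a_0(K_2{\bm H}_0,{\bm w}_0)$, the term $-(\ka^2+1)({\bm H}_0,\ov{\bm w}_0)$ involves only the ${\bm L}^2$-pairing, which is compact by Lemma~\ref{lem3.6} (the compact imbedding $X_0\hookrightarrow{\bm L}^2(B_R\se\ov D)$); the term ${\rm i}\ka\langle G_e^1(\hat x\times{\bm H}_0),\g_T\ov{\bm w}_0\rangle$ is compact because, by Lemma~\ref{lem3.3}(c.1), the map ${\bm H}_0\mapsto G_e^1(\hat x\times{\bm H}_0)$ is compact from $X_0$ into $H^{-1/2}_{\Div}(S_R)$. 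More precisely, I would argue that if ${\bm H}_0^{(n)}\rightharpoonup 0$ weakly in $X_0$, then $b_0({\bm H}_0^{(n)},{\bm w}_0)\to 0$ uniformly for ${\bm w}_0$ in the unit ball of $X_0$, which forces $K_2{\bm H}_0^{(n)}\to 0$ strongly (using that $\|K_2{\bm H}_0^{(n)}\|_{X_0}^2 \lesssim \Rt\, a_0(K_2{\bm H}_0^{(n)},K_2{\bm H}_0^{(n)}) = \Rt\, b_0({\bm H}_0^{(n)}, K_2{\bm H}_0^{(n)})$ and bounding the right side via the uniform convergence together with $\|K_2{\bm H}_0^{(n)}\|_{X_0}$ being bounded). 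This yields compactness of $K_2$.

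Finally, I would deduce that $I+K_2$ is an isomorphism. By the Fredholm alternative it suffices to show $I+K_2$ is injective. If $(I+K_2){\bm H}_0=0$, then for all ${\bm w}_0\in X_0$ we get $0=a_0((I+K_2){\bm H}_0,{\bm w}_0)=a_0({\bm H}_0,{\bm w}_0)+b_0({\bm H}_0,{\bm w}_0)={\bm A}_1({\bm H}_0,{\bm w}_0)$. Taking ${\bm w}_0={\bm H}_0$ and using the structure of ${\bm A}_1$ together with the sign property Lemma~\ref{lem3.3}(b)--(c.2), one shows ${\bm H}_0=0$: indeed ${\bm A}_1({\bm H}_0,{\bm H}_0)=\|\curl{\bm H}_0\|^2_{{\bm L}^2}-\ka^2\|{\bm H}_0\|^2_{{\bm L}^2}+{\rm i}\ka\langle G_e(\hat x\times{\bm H}_0),\g_T\ov{\bm H}_0\rangle$; splitting $G_e=G_e^1+G_e^2$ is not what is needed here—rather one uses that, since ${\bm H}_0\in X_0$ is divergence-free with the correct boundary relation, ${\bm A}_1({\bm H}_0,\na\psi)=0$ for all $\psi\in S$, so in fact ${\bm A}_1({\bm H}_0,{\bm w})=0$ for \emph{all} ${\bm w}\in X$ by the decomposition $X=X_0\oplus\na S$; this means the corresponding radiating electromagnetic field with vanishing Cauchy data at $\pa D$ and outgoing behaviour is a solution of the homogeneous exterior Maxwell problem, whence ${\bm H}_0=0$ by uniqueness for the exterior problem. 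I expect the compactness of $K_2$ to be the main obstacle, since it requires carefully combining the two compactness inputs (Lemma~\ref{lem3.6} and Lemma~\ref{lem3.3}(c.1)) and controlling the $H^{-1/2}_{\Div}$--$H^{-1/2}_{\Curl}$ duality uniformly over the unit ball of $X_0$.
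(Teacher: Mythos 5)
Your proposal follows essentially the same route as the paper: coercivity of $a_0(\cdot,\cdot)$ from Lemma \ref{lem3.3}(c.2), construction of $K_2$ via Lax--Milgram, compactness of $K_2$ from the compact imbedding of Lemma \ref{lem3.6} together with Lemma \ref{lem3.3}(c.1), and injectivity of $I+K_2$ by enlarging the test space from $X_0$ to all of $X$ via the Helmholtz-type decomposition and then invoking uniqueness for the exterior Maxwell problem. The only imprecision is the phrase ``vanishing Cauchy data at $\pa D$'': the variational identity ${\bm A}_1({\bm H}_0,{\bm w})=0$ for all ${\bm w}\in X$ yields only the natural condition $\nu\times\curl{\bm H}_0=0$ on $\pa D$, which is nonetheless sufficient for the exterior uniqueness argument (Green's formula on $S_R$, Rellich's lemma and unique continuation), exactly as in the paper.
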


\begin{proof}
The coerciveness of $a_0(\cdot.\cdot)$ follows easily from the property of $G^2_e$ 
(see (c.2) in Lemma \ref{lem3.3}).
By the Cauchy-Schwartz inequality, it is easy to see that, for each fixed ${\bm H}_0\in X_0$,
$b_0({\bm H}_0,\cdot)$ defines a bounded linear functional on $X_0$. Thus, and by the Lax-Milgram theorem
and the coerciveness of $a_0(\cdot.\cdot)$ on $X_0\times X_0$,
there is an operator $K_2$ on $X_0$ such that $b_0({\bm H}_0,{\bm w}_0)=a_0(K_2{\bm H}_0,{\bm w}_0)$.
The compactness of $K_2$ follows easily from the property of $G^1_e$ (see Lemma \ref{lem3.3}) 
and the compact imbedding $X_0\hookrightarrow{\bm L}^2(B_R\se\ov{D})$ (see Lemma \ref{lem3.6}). 

We now prove that $I+K_2$ is an isomorphism on $X_0$. By the Risze-Fredholm theory, it is enough 
to show that $I+K_2$ is injective. Let $(I+K_2){\bm w}=0$ with ${\bm w}\in X_0$. 
Then ${\bm w}$ satisfies
\ben\no
{\bm A}_1({\bm w},\psi)=a_0({\bm w}+ K_2{\bm w},\psi)=0 \quad {\rm for\;all\;\psi\in X_0}.
\enn
By the definition of $X_0$, we know that ${\bm A}_1({\bm w},\na\phi)=0$ for all $\phi\in S$.
This, combined with the Helmholtz-type decomposition for $\Psi\in X$, yields
\ben
{\bm A}_1({\bm w},\Psi)=0\quad{\rm for\;all\;}\Psi\in X.
\enn
Therefore, ${\bm w}$ satisfies the boundary value problem
\ben
\left\{
\begin{array}{ll}
\ds\curl\curl {\bm w}-\ka^2{\bm w}= 0& \qquad {\rm in\;}\;\; B_R\setminus\ov{D},\\[1mm]
\ds\nu\times\curl{\bm w}=0 &\qquad {\rm on\;}\;\;\pa D,\\[1mm]
\ds\frac{1}{\rm i\ka}\hat{x}\times\curl{\bm w}=G_e(\hat{x}\times{\bm w})&\qquad{\rm on}\;\;\;S_R
\end{array}
\right.
\enn
in the distribution sense. 
By the third equation in the above problem it is seen that ${\bm w}$ can be extended into 
$\R^3\se\ov{B_R}$ by considering the exterior problem
\ben
\left\{
\begin{array}{ll}
\ds\curl\curl {\bm v}-\ka^2{\bm v}=0 &\qquad {\rm in\,}\;\;\mathbb{R}^3\setminus\ov{B_R},\\[1mm]
\ds\hat{x}\times{\bm v}=\hat{x}\times{\bm w}&\qquad {\rm on\,}\;\;S_R,\\[1mm]
\ds\lim_{r\to\infty}r(\hat{x}\ti\curl{\bm v}+ {\rm i}\ka{\bm v})=0,& \qquad r=|x|.
\end{array}
\right.
\enn
By the definition of $G_e$ it follows that $\hat{x}\times\curl{\bm w}=\hat{x}\times\curl{\bm v}$ on $S_R$. 
Hence, the function ${\bm w}^e$, which is defined by ${\bm w}$ in $B_R\se\ov{D}$ and by ${\bm v}$ 
in $\R^3\se\ov{B_R}$, satisfies the exterior problem
\ben
\left\{
\begin{array}{ll}
\ds\curl\curl{\bm w}^e-\ka^2{\bm w}^e=0 &\qquad {\rm in\;}\;\;\mathbb{R}^3\setminus\ov{D},\\[1mm]
\ds\nu\times\curl{\bm w}^e=0\ &\qquad {\rm on\;}\;\;\pa D,\\[1mm]
\ds\lim_{r\to\infty}r(\hat{x}\times\curl{\bm w}^e+{\rm i\ka}{\bm w}^e)=0,&\qquad r=|x|.
\end{array}
\right.
\enn
Using Green's formula for ${\bm w}^e$, one has
\ben
\Int_{S_R}\nu\times\ov{{\bm w}^e}\cdot\frac{1}{\rm i\ka}\curl{\bm w}^e\;{\rm d}s=0.
\enn
This, together with the Rellich lemma and the unique continuation principle, implies 
${\bm w}^e\equiv0$ in $\R^3\se\ov{D}$, and thus $w=0$. Therefore, $I+K_2$ is injective on $X_0$, 
which ends the proof.	
\end{proof}

Based on the above analysis and Lemmas \ref{lem3.4}, \ref{lem3.5}, \ref{lem3.6} and \ref{lem3.7}, 
we can split ${\bm A}(({\bm u},{\bm H}),({\bm v},{\bm w}))$ as follows:
\ben
{\bm A}(({\bm u},{\bm H}),({\bm v},{\bm w})):=\wi{\bm A}(({\bm u},{\bm H}),({\bm v},{\bm w}))
+{\bm K}_3(({\bm u},{\bm H}),({\bm v},{\bm w}))
\enn
for all ${\bm u},{\bm v}\in Q$ and ${\bm H},{\bm w}\in X$ with ${\bm H}={\bm H}_0+\na\phi$ 
and ${\bm w}={\bm w}_0+\na\psi$, ${\bm H}_0,{\bm w}_0\in X_0$ and $\phi,\psi\in S$,
where $\wi{\bm A}(\cdot,\cdot)$ and ${\bm K}_3(\cdot,\cdot)$ are defined as
\ben
\wi{\bm A}(({\bm u},{\bm H}),({\bm v},{\bm w}))
&:=&{\bm A}_2({\bm u},{\bm v})-a(\phi,\psi)-a_0({\bm H}_0,{\bm w}_0)-{\bm A}_1(\na\phi,{\bm w}_0) \\
&&+\frac{{\rm i}\ka}{\ov{b}_2}\langle\g_t\na\phi,\g_T\ov{\bm v}
\rangle_{H^{-\frac{1}{2}}_{\Div}\times H^{-\frac{1}{2}}_{\Curl}}
+\frac{{\rm i}\ka}{b_2}\langle\g_t\na\ov{\psi},\g_T{\bm u}
\rangle_{H^{-\frac{1}{2}}_{\Div}\times H^{-\frac{1}{2}}_{\Curl}}, \\
{\bm K}_3(({\bm u},{\bm H}),({\bm v},{\bm w}))
&:=&-b(\phi,\psi)-b_0({\bm H}_0,{\bm w}_0)+\frac{{\rm i}\ka}{\ov{b}_2}
\langle\g_t{\bm H}_0,\g_T\ov{\bm v}\rangle_{H^{-\frac{1}{2}}_{\Div}\times H^{-\frac{1}{2}}_{\Curl}}\\
&&+\frac{{\rm i}\ka}{b_2}\langle\g_t\ov{\bm w}_0,\g_T{\bm u}
\rangle_{H^{-\frac{1}{2}}_{\Div}\times H^{-\frac{1}{2}}_{\Curl}}.
\enn
Further, define the sesquilinear form
\ben\no
\wi{\bm K}(({\bm u},{\bm H}),({\bm v},{\bm w})):={\bm K}(({\bm u},{\bm H}),({\bm v},{\bm w}))
+{\bm K}_3(({\bm u},{\bm H}),({\bm v},{\bm w})).
\enn
Then (\ref{3.4a}) can be reduced to the problem: find $({\bm u},{\bm H})\in Q\times X$ such that
\be\label{3.11b}
\wi{\bm A}(({\bm u},{\bm H}),({\bm v},{\bm w}))+\wi{\bm K}(({\bm u},{\bm H}),({\bm v},{\bm w})) 
=\mathcal{F}(({\bm v},{\bm w}))
\en
for all $({\bm v},{\bm w})\in Q\times X$.

Let $\wi{\bm A},\wi{\bm K}:Q\times X\mapsto (Q\ti X)^\prime$ be the linear, bounded operators 
induced by the corresponding sesquilinear forms $\wi{\bm A}(\cdot,\cdot)$, $\wi{\bm K}(\cdot,\cdot)$,
respectively, with using the Riesz representation lemma in Hilbert spaces. 
Then we have the following result.

\begin{theorem}\label{thm3.8}
If $\I(b_1\ov{b}_2)<0$, then $\wi{\bm A}+\wi{\bm K}$ is of Fredholm type with index $0$.
\end{theorem}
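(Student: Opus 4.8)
The plan is to show that $\wi{\bm A}$ defines an isomorphism on $Q\times X$ and that $\wi{\bm K}$ induces a compact operator, so that $\wi{\bm A}+\wi{\bm K}$ is Fredholm of index $0$ by the classical stability of the index under compact perturbations. First I would verify compactness of $\wi{\bm K}$: the term ${\bm K}(({\bm u},{\bm H}),({\bm v},{\bm w}))$ only involves the ${\bm L}^2(D)$ inner product of ${\bm u}$ and $\ov{\bm v}$, which is compact by the Rellich imbedding ${\bm H}^1(D)\hookrightarrow{\bm L}^2(D)$; in ${\bm K}_3$, the terms $b(\phi,\psi)$ and $b_0({\bm H}_0,{\bm w}_0)$ are compact by Lemmas \ref{lem3.4} and \ref{lem3.7} (recall $b=a(K_1\cdot,\cdot)$ and $b_0=a_0(K_2\cdot,\cdot)$ with $K_1,K_2$ compact), while the mixed trace terms $\langle\g_t{\bm H}_0,\g_T\ov{\bm v}\rangle$ and $\langle\g_t\ov{\bm w}_0,\g_T{\bm u}\rangle$ are compact because restricting the trace $\g_t$ to the divergence-free subspace $X_0$ gains regularity via the $H^{1/2+s}_{loc}$-estimate behind Lemma \ref{lem3.6}, so that $\g_t|_{X_0}:X_0\to {\bm H}^{s}(\pa D)$ is compact into the relevant trace space, and the pairing with the bounded trace $\g_T$ of the other argument is then compact.

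Next I would establish that $\wi{\bm A}$ is an isomorphism. The key is that, after the Helmholtz decomposition ${\bm H}={\bm H}_0+\na\phi$, ${\bm w}={\bm w}_0+\na\psi$, the form $\wi{\bm A}$ becomes block-structured: ${\bm A}_2({\bm u},{\bm v})$ is coercive on $Q\times Q$ by the Legendre ellipticity of $\mathcal C$ together with the choice $M_1>\om^2\|\rho\|_{L_\infty}$; $-a_0({\bm H}_0,{\bm w}_0)$ is (up to sign and a harmless constant factor $-{\rm i}\ka/b_1\ov b_2$) coercive on $X_0\times X_0$ by Lemma \ref{lem3.7}; and $-a(\phi,\psi)$ is elliptic on $S\times S$ by Lemma \ref{lem3.4}. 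What remains are the coupling terms: the cross term ${\bm A}_1(\na\phi,{\bm w}_0)$ vanishes identically because ${\bm w}_0\in X_0$ and, by the very definition (\ref{3.6cc}) of $X_0$ together with the representation ${\bm A}_1(\na\phi,\cdot)$ testing against elements of $\na S$ — more precisely ${\bm A}_1(\na\phi,{\bm w}_0)=\ov{{\bm A}_1({\bm w}_0,\na\phi)}$-type manipulations reduce it to the defining relation of $X_0$; and the two trace couplings $\frac{{\rm i}\ka}{\ov b_2}\langle\g_t\na\phi,\g_T\ov{\bm v}\rangle$ and $\frac{{\rm i}\ka}{b_2}\langle\g_t\na\ov\psi,\g_T{\bm u}\rangle$ couple only the $\na\phi$/$\na\psi$ components of the Maxwell part to the elastic unknowns ${\bm u},{\bm v}$.

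To handle this remaining coupling I would bound $\wi{\bm A}$ from below using the Gårding-type estimate on the diagonal blocks and absorbing the off-diagonal trace terms: since $\g_t\na\phi$ lies in $H^{-1/2}_{\Div}(\pa D)$ with norm controlled by $\|\na\phi\|_{X}$ and $\g_T{\bm u}$ by $\|{\bm u}\|_Q$, each coupling term is bounded by $C\|\na\phi\|_X\|{\bm u}\|_Q$, and by Young's inequality with a small parameter $\delta$ these are absorbed into the coercive lower bounds $c\|{\bm u}\|_Q^2$, $c\|{\bm H}_0\|_{X_0}^2$, $c\|\na\phi\|_S^2$, provided the imaginary-part sign condition $\I(b_1\ov b_2)<0$ makes the real parts of the coercive contributions line up with a single sign (this is exactly where the hypothesis enters: it fixes the argument of the factor $-{\rm i}\ka/b_1\ov b_2$ so that $\Rt(-{\rm i}\ka/b_1\ov b_2)$ and the signs in $-a_0$ and $-a$ are compatible). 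Combined with the norm equivalence (\ref{3.6aa}) from Lemma \ref{lem3.5}, this yields $|\wi{\bm A}(({\bm u},{\bm H}),({\bm u},{\bm H}))|\ge c\,\|({\bm u},{\bm H})\|_{Q\times X}^2$, hence by Lax–Milgram $\wi{\bm A}$ is an isomorphism. The main obstacle I anticipate is precisely the bookkeeping of complex constants: verifying that under $\I(b_1\ov b_2)<0$ one can choose a single complex rotation making all diagonal blocks simultaneously coercive (in real part) so that the sign-indefinite trace couplings can genuinely be absorbed — a naive estimate may leave a term of the wrong sign, and one likely needs to exploit the skew-adjoint structure of the pair of trace terms (they are conjugates of one another up to the $b_2$ versus $\ov b_2$ placement) so their combined contribution to $\Rt\wi{\bm A}$ is controlled rather than merely bounded.
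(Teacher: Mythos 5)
Your overall strategy (show $\wi{\bm K}$ compact and $\wi{\bm A}$ an isomorphism, then invoke stability of the Fredholm index) matches the paper, and your compactness argument for $\wi{\bm K}$ is essentially the paper's: ${\bm K}$ via Rellich, $b$ and $b_0$ via Lemmas \ref{lem3.4} and \ref{lem3.7}, and the mixed trace terms via the extra regularity ${\bm w}_0|_{\pa D}\in{\bm H}^{1/2}(\pa D)$ for ${\bm w}_0\in X_0$. However, the isomorphism part of your proposal contains two genuine gaps. First, the cross term ${\bm A}_1(\na\phi,{\bm w}_0)$ does \emph{not} vanish. The definition (\ref{3.6cc}) of $X_0$ only gives ${\bm A}_1({\bm H}_0,\na\psi)=0$, i.e.\ with the gradient in the \emph{second} slot; since ${\bm A}_1$ contains the non-Hermitian Calder\'on term ${\rm i}\ka\langle G_e(\hat{x}\times\cdot),\g_T\ov{\,\cdot\,}\rangle$, you cannot pass to $\ov{{\bm A}_1({\bm w}_0,\na\phi)}$ to kill the term with the gradient in the first slot. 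The paper keeps $-{\bm A}_1(\na\phi,{\bm w}_0)$ inside $\wi{\bm A}$ and handles it by a block-\emph{triangular} solve: first Lax--Milgram on the $(Q\times S)$-block to get $(\wi{\bm u},\wi\phi)$, then Lax--Milgram on the $X_0$-block for $\wi{\bm H}_0$ with ${\bm A}_1(\na\wi\phi,{\bm w}_0)$ moved to the right-hand side as data (equations (\ref{3.17a})--(\ref{3.20a})).

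Second, your proposed mechanism for the trace couplings — Young's inequality absorption into the coercive diagonal blocks — does not work and is not what the hypothesis $\I(b_1\ov b_2)<0$ buys. A bound $C\|\na\phi\|_X\|{\bm u}\|_Q\le \frac{C}{2\delta}\|\na\phi\|_X^2+\frac{C\delta}{2}\|{\bm u}\|_Q^2$ leaves the un-absorbable large term $\frac{C}{2\delta}\|\na\phi\|_X^2$; there is no small parameter available. The actual mechanism is the \emph{exact} cancellation (\ref{3.15a}): because the two trace terms carry $1/\ov b_2$ and $1/b_2$ and are complex conjugates of one another up to that placement, their sum is $2{\rm i}\ka\,\Rt(b_2\ov c)/|b_2|^2$, which is purely imaginary, so their contribution to $\Rt\,\wi{\bm A}_1$ is identically zero — this is the "skew-adjoint structure" you flag as an anticipated obstacle but never resolve. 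Moreover, the hypothesis $\I(b_1\ov b_2)<0$ enters only to make $\Rt[{\bm A}_2({\bm u},{\bm u})]\ge C\|{\bm u}\|_Q^2$ via Korn's inequality; it does \emph{not} align all diagonal blocks to a single sign. Indeed $\wi{\bm A}$ contains $-a_0$, whose real part on the diagonal is $\le -C\|{\bm H}_0\|_X^2$, opposite in sign to ${\bm A}_2$ and $-a$; a global coercivity estimate $|\wi{\bm A}(({\bm u},{\bm H}),({\bm u},{\bm H}))|\ge c\|({\bm u},{\bm H})\|^2$ on all of $Q\times X$ is therefore not available, and the triangular structure is essential rather than a convenience.
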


\begin{proof}
By the compact imbedding ${\bm H}^1(D)\mapsto{\bm L}^2(D)$ we deduce that ${\bm K}(\cdot,\cdot)$ 
is a compact form on ${\bm H}^1(\cdot)\times{\bm H}^1(\cdot)$.
By Lemma \ref{lem3.3} it is known that $b(\cdot,\cdot)$ is a compact form on $S\times S$, 
and by Lemmas \ref{lem3.6} and \ref{lem3.3} it is also known that $b_0(\cdot,\cdot)$ is a compact form
on $X_0\times X_0$. Further, by a similar argument as in deriving (\ref{3.8aa}) (see also \cite{P03})
it follows that, if ${\bm w}_0\in X_0$ then ${\bm w}_0|_{\pa D}\in{\bm H}^{\frac{1}{2}}(\pa D)$.
This, combined with the compact imbedding ${\bm H}^{\frac{1}{2}}(\pa D)\mapsto{\bm L}^2(\pa D)$, 
gives that ${\bm K}_3(\cdot,\cdot)$ is a compact form on $(Q\times X)\times(Q\times X)$. 
Thus, the operator $\wi{\bm K}$ is compact from $Q\times X$ into $(Q\times X)^\prime$.
It remains to show that $\wi{\bm A}$ is an isomorphism from $Q\times X$ into $(Q\times X)^\prime$.
	
Since $\I (b_1\ov{b}_2)<0$, we obtain by using Korn's inequality that
\be\label{3.5a}
\Rt\left[{\bm A}_2({\bm u},{\bm u})\right]\ge C\|{\bm u}\|_Q^2\quad{\rm for\;all\;}{\bm u}\in Q
\en
for some constant $C>0$. Further, by Lemmas \ref{lem3.3} and \ref{lem3.4} it can be concluded that
\be\label{3.6a}
&&-\Rt\left[a(\phi,\phi)\right]\ge C\|\na\phi\|^2_{X}\quad{\rm for\;all\;}\phi\in S,\\ \label{3.8a}
&&\;\;\;\Rt\left[a_0({\bm H}_0,{\bm H}_0)\right]\ge C\|{\bm H}_0\|^2_X\quad
{\rm for\;all\;}{\bm H}_0\in X_0
\en
for some constant $C>0$.	
Recalling that
\ben
\frac{{\rm i}\ka c}{b_2}+\frac{{\rm i}\ka\ov{c}}{\ov{b_2}}=2{\rm i}\ka\Rt(b_2\ov{c})\frac{1}{|b_2|^2},
\enn
we immediately have
\be\label{3.15a}
\Rt\left[\frac{{\rm i}\ka}{\ov{b}_2}\langle\g_t\na\phi,\g_T\ov{\bm u}
\rangle_{H^{-\frac{1}{2}}_{\Div}\times H^{-\frac{1}{2}}_{\Curl}}
+\frac{{\rm i}\ka}{b_2}\langle\g_t\na\ov{\phi},\g_T{\bm u}
\rangle_{H^{-\frac{1}{2}}_{\Div}\times H^{-\frac{1}{2}}_{\Curl}}\right]=0.
\en
Therefore, by (\ref{3.5a})-(\ref{3.6a}) and (\ref{3.15a}) we obtain that the real part of
\ben
\wi{\bm A}_1(({\bm u},\phi),({\bm v},\psi))&:=&{\bm A}_2({\bm u},{\bm v})-a(\phi,\psi) \\
&& +\frac{{\rm i}\ka}{\ov{b}_2}\langle\g_t\na\phi,\g_T\ov{\bm v}
\rangle_{H^{-\frac{1}{2}}_{\Div}\times H^{-\frac{1}{2}}_{\Curl}}
+\frac{{\rm i}\ka}{b_2}\langle\g_t\na\ov{\psi},\g_T{\bm u}
\rangle_{H^{-\frac{1}{2}}_{\Div}\times H^{-\frac{1}{2}}_{\Curl}}
\enn
is coercive on $(Q\times S)\times(Q\times S)$. Thus, and by the Lax-Milgram lemma, 
for each bounded functional $(f_1,f_2)\in Q'\times(\na S)'$ there exists a unique element 
$(\wi{\bm u},\wi{\phi})\in (Q\times S)$ satisfying that
\be\label{3.17a}
\wi{\bm A}_1((\wi{\bm u},\wi{\phi}),({\bm v},\psi))=f_1({\bm v})
+f_2(\na\psi)\quad{\rm for\;all\;}({\bm v},\psi)\in Q\times S
\en
and the estimate
\be\label{3.18a}
\|\wi{\bm u}\|_Q+\|\na\wi{\phi}\|_X\leq C(\|f_1\|_{Q'} +\| f_2\|_{(\na S)'})
\en
for some constant $C>0$.
	
Moreover, by (\ref{3.8a}), the boundedness of ${\bm A}_1(\cdot,\cdot)$ and the Lax-Milgram lemma,
for each $f_3\in (X_0)'$ there exists a unique element $\wi{\bm H}_0\in X_0$ satisfying that
\be\label{3.19a}
-a_0(\wi{\bm H}_0,{\bm w}_0)={\bm A}_1(\na\wi{\phi},{\bm w}_0)
+f_3({\bm w}_0)\quad {\rm for\;all\;}{\bm w}_0\in X_0
\en
and the estimate
\be\label{3.20a}
\|\wi{\bm H}_0\|_{X}\le C(\|\na\wi{\phi}\|_X+\|f_3\|_{(X_0)'})
\le C(\|f_1\|_{Q'}+\|f_2\|_{(\na S)'}+\|f_3\|_{(X_0)'}),
\en
where $C>0$ is a constant.
Combining (\ref{3.17a})-(\ref{3.20a}) implies that $\wi{\bm A}$ is an isomorphism 
from $Q\times X$ to $(Q\times X)^{'}$. The proof is thus compete.
\end{proof}

Using Theorem \ref{thm3.8}, we can easily obtain the following well-posedness result for the problem (\ref{3.3a}).

\begin{theorem}\label{thm3.9}
Let $\om\notin\mathcal{P}(\om)$. If $\Rt(b_1\ov{b}_2)=0$ and $\I(b_1\ov{b}_2)<0$, 
then the problem (\ref{3.3a}) has a unique solution $({\bm u},{\bm H})\in Q\times X$ 
satisfying the estimate 
\ben
\|{\bm u}\|_Q+\|{\bm H}\|_X\le C(\|{\bm f}_1\|_{{\bm H}^{-\frac{1}{2}}(\pa D)}
+\|{\bm f}_2\|_{H^{-\frac{1}{2}}_{\Div}(\pa D)}),
\enn
where $C>0$ is a constant independent of the choice of ${\bm f}_1$ and ${\bm f}_2$.
\end{theorem}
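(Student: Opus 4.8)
The plan is to obtain existence and uniqueness from the Fredholm alternative, using Theorem~\ref{thm3.8} for the Fredholm property and (the proof of) Theorem~\ref{thm3.1} for injectivity, and then to read off the stability estimate from the bounded-inverse theorem. First I would record the chain of equivalences underpinning the argument: by construction the variational problem (\ref{3.11b}) is equivalent to (\ref{3.4a}), hence to the reduced boundary value problem (\ref{3.3a}); and since $G_e$ is exactly the exterior Calder\'on operator for the Maxwell system, any $({\bm u},{\bm H})\in Q\times X$ solving (\ref{3.3a}) extends ${\bm H}$ to a Silver--M\"uller radiating field in $D^c$ solving (\ref{3.2a}), and conversely. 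Let $\wi{\bm A},\wi{\bm K}:Q\times X\to(Q\times X)'$ be the operators introduced before the theorem. Under the hypothesis $\I(b_1\ov{b}_2)<0$, Theorem~\ref{thm3.8} gives that $\wi{\bm A}+\wi{\bm K}$ is Fredholm of index $0$, so it is an isomorphism if and only if it is injective.

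Next I would prove injectivity, i.e.\ uniqueness for (\ref{3.3a}) with ${\bm f}_1={\bm f}_2=0$. Through the equivalences above, a solution of the homogeneous (\ref{3.3a}) corresponds to a solution of (\ref{2.1a})--(\ref{2.5a}) with ${\bm E}^i={\bm H}^i=0$, so the argument of Theorem~\ref{thm3.1} applies verbatim: Green's formula together with the homogeneous transmission conditions yields
\[
\Rt\Int_{\pa D}\nu\ti\ov{{\bm E}^s}\cdot{\bm H}^s\,{\rm d}s
=-\Rt\frac{1}{b_1\ov{b}_2}\Int_D[\mathcal{E}({\bm u},\ov{{\bm u}})-\rho\om^2|{\bm u}|^2]\,{\rm d}x=0,
\]
since $\Rt(b_1\ov{b}_2)=0$ makes $1/(b_1\ov{b}_2)$ purely imaginary while the volume integrand is real. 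Rellich's lemma and unique continuation then force ${\bm E}^s={\bm H}^s=0$ in $D^c$, whence ${\bm u}$ solves (\ref{3.1a})--(\ref{3.1c}) and ${\bm u}=0$ because $\om\notin\mathcal{P}(\om)$. Thus $\wi{\bm A}+\wi{\bm K}$ is injective, hence an isomorphism, and (\ref{3.3a}) has a unique solution $({\bm u},{\bm H})\in Q\times X$ for every right-hand side.

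Finally, the isomorphism property gives $\|{\bm u}\|_Q+\|{\bm H}\|_X\le C\,\|\mathcal{F}\|_{(Q\times X)'}$ with $C$ independent of the data, and from the explicit form of $\mathcal{F}$ together with the boundedness of the trace operator $\g_T:H(\curl,B_R\se\ov{D})\to H^{-1/2}_{\Curl}(\pa D)$ one obtains $\|\mathcal{F}\|_{(Q\times X)'}\le C(\|{\bm f}_1\|_{{\bm H}^{-1/2}(\pa D)}+\|{\bm f}_2\|_{H^{-1/2}_{\Div}(\pa D)})$, which completes the proof. I expect the only genuinely delicate point to be the careful verification of the equivalences between (\ref{3.3a}), (\ref{3.2a}) and the transmission problem (\ref{2.1a})--(\ref{2.5a}) with zero incident field, so that the uniqueness computation of Theorem~\ref{thm3.1} can be invoked legitimately; once that is in place, the remainder is a routine application of the Fredholm alternative and the open mapping theorem.
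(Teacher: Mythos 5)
Your proposal is correct and follows essentially the same route as the paper, which simply invokes Theorem~\ref{thm3.8} for the Fredholm index-zero property and the uniqueness argument of Theorem~\ref{thm3.1} (valid under $\Rt(b_1\ov{b}_2)=0$ and $\om\notin\mathcal{P}(\om)$) for injectivity, then reads the stability estimate off the bounded inverse. Your explicit attention to the equivalence of (\ref{3.3a}) with (\ref{3.2a}) via the Calder\'on operator, and to the boundedness of $\mathcal{F}$ in terms of the data, fills in steps the paper leaves implicit but does not change the argument.
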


\section{Uniqueness of the inverse problem}\label{sec4}
\setcounter{equation}{0}

In this section, based on the analysis for the forward scattering problem (\ref{2.1a})-(\ref{2.5a}), 
we investigate the inverse problem of determining the elastic body $D$ by the electromagnetic far-field 
measurements. We shall show that the shape and location of the elastic body can be uniquely recovered  
by the magnetic or electric far-field pattern corresponding to incident plane waves with all incident 
directions and polarizations. Motivated by our previous work in \cite{JBH18} 
for the reduced wave equation and the Maxwell equations, our method is based on a coupled system of PDEs 
constructed in a sufficiently small domain as well as the uniform a priori estimate in ${\bm H}^1(\cdot)$ 
for the elastic field.

\subsection{A coupled system of PDEs}

In order to study the inverse problem, we introduce the following boundary value problem in a bounded, 
simply connected domain $\Om$ with a Lipschitz continuous boundary $\pa\Om$:
\be\label{4.1a}
\left\{\begin{array}{ll}
\ds\curl\curl{\bm H}+{\bm H}=\xi_1& \quad {\rm in\;}\;\;\Om,\\[1mm]
\ds\na\cdot(\hat{\mathcal{C}}:\na{\bm u})-{\bm u}=\xi_2 & \quad{\rm in\;}\;\;\Om,\\[1mm]
\ds T{\bm u}-\hat{b}_1\nu\times{\bm H}={\bm h}_1 & \quad {\rm on\;}\;\;\pa\Om,\\[1mm]
\ds\nu\times\curl{\bm H} +\frac{{\rm i}\ka}{\hat{b}_2}\nu\times{\bm u}={\bm h}_2&\quad{\rm on\;}\;\;\pa\Om,
\end{array}
\right.
\en
where $\ds\hat{\mathcal{C}}:=(\hat{C}_{ijkl}(x))_{i,j,k,l=1}^3$ with $\ds\hat{C}_{ijkl}\in L_\infty(\Om)$ 
($i,j,k,l=1,2,3$), $\ds\hat{b}_1,\hat{b}_2\in\C$ with $\ds\hat{b}_1\hat{b}_2\not=0$, 
$\ds\xi_1,\xi_2\in{\bm L}^2(\Om)$, $\ds{\bm h}_1\in{\bm H}^{-1/2}(\pa\Om)$,
$\ds{\bm h}_2\in H^{-1/2}_{\Div}(\pa\Om)$ and $\hat{\mathcal{C}}$ satisfies the symmetry condition and the
Legendre elliptic condition (see Subsection \ref{sec2.2}).

\begin{lemma}\label{lem4.1}
If $\I(\hat{b}_1\ov{\hat{b}}_2)<0$, then the problem $(\ref{4.1a})$ has a unique solution 
$({\bm H},{\bm u})\in H(\curl,\Om)\times{\bm H}^1(\Om)$ such that
\ben
&&\|{\bm H}\|_{H(\curl,\Om)}+\|{\bm u} \|_{{\bm H}^1(\Om)}\\
&&\qquad\qquad\le C\left[\|\xi_1\|_{{\bm L}^2(\Om)}+\|\xi_2\|_{{\bm L}^2(\Om)}
+\|{\bm h}_1\|_{{\bm H}^{-1/2}(\pa\Om)}+\|{\bm h}_2\|_{H^{-1/2}_{\Div}(\pa\Om)}\right],
\enn
where $C>0$ is a constant independent of $\xi_1,\xi_2,{\bm h}_1$ and ${\bm h}_2$.
\end{lemma}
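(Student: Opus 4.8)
The plan is to follow the same Fredholm-alternative strategy used for Theorem \ref{thm3.8}, but in the much simpler setting of the bounded domain $\Om$ where all the relevant embeddings are compact and no Calder\'{o}n operator is needed. First I would derive the variational formulation: multiply the Maxwell equation in $(\ref{4.1a})$ by a test field $\ov{\bm w}\in H(\curl,\Om)$, integrate by parts, and insert the fourth boundary condition to replace $\nu\times\curl{\bm H}$ by ${\bm h}_2-({\rm i}\ka/\hat b_2)\nu\times{\bm u}$; likewise multiply the Navier equation by $\ov{\bm v}\in{\bm H}^1(\Om)$, integrate by parts, and use the third boundary condition $T{\bm u}=\hat b_1\nu\times{\bm H}+{\bm h}_1$ on $\pa\Om$. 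After scaling the two identities by suitable constants (as in $(\ref{3.4a})$, to make the cross boundary terms adjoint to one another up to the factor $\hat b_1\ov{\hat b}_2$), adding them produces a sesquilinear form $\mathcal{B}(({\bm H},{\bm u}),({\bm w},{\bm v}))$ on $\big(H(\curl,\Om)\times{\bm H}^1(\Om)\big)^2$ and a bounded antilinear functional $\mathcal{G}$ built from $\xi_1,\xi_2,{\bm h}_1,{\bm h}_2$; the norm of $\mathcal{G}$ is controlled by the right-hand side of the claimed estimate, which gives the stability bound automatically once invertibility is known.

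Next I would split $H(\curl,\Om)=X_0^\Om\oplus\na S^\Om$ by the Helmholtz-type decomposition of Lemma \ref{lem3.5} applied to $\Om$ (here $S^\Om$ is $H^1(\Om)$ modulo constants, and $X_0^\Om$ consists of divergence-free fields with vanishing normal trace on $\pa\Om$), so that $\mathcal{B}$ decomposes into a part acting on $X_0^\Om$, a part acting on $\na S^\Om$, the elastic part on ${\bm H}^1(\Om)$, and the coupling boundary terms. I would then write $\mathcal{B}=\wi{\mathcal B}+\mathcal{K}$, where $\wi{\mathcal B}$ keeps the principal (coercive) pieces — the $\curl\curl+\mathrm{id}$ terms, the $a_0$-type form on $X_0^\Om$, the $a$-type form on $\na S^\Om$, the Korn-coercive elastic form ${\bm A}_2$, and the skew-symmetric coupling terms — and $\mathcal{K}$ collects the lower-order $L^2$ terms together with the boundary terms that pair $X_0^\Om$-components (these are compact because $X_0^\Om\hookrightarrow {\bm H}^{1/2}(\Om)$ by Theorem 3.50 of \cite{P03}, hence its trace is compact into ${\bm L}^2(\pa\Om)$, and because ${\bm H}^1(\Om)\hookrightarrow{\bm L}^2(\Om)$ compactly). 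The sign hypothesis $\I(\hat b_1\ov{\hat b}_2)<0$ is exactly what makes $\Rt\,\wi{\mathcal B}$ coercive, just as $(\ref{3.5a})$–$(\ref{3.15a})$: the skew-symmetric coupling contributes zero real part, the elastic form is coercive by Korn, and the electromagnetic forms are coercive by Lemma \ref{lem3.7} and Lemma \ref{lem3.4} (or their direct analogues on $\Om$). Thus $\wi{\mathcal B}$ is an isomorphism by Lax--Milgram, $\mathcal{K}$ is compact, and $\mathcal{B}$ is Fredholm of index zero.

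Finally I would establish injectivity, which together with the Fredholm property yields existence, uniqueness, and the stability estimate. Setting $\xi_1=\xi_2=0$, ${\bm h}_1={\bm h}_2=0$ and taking $({\bm w},{\bm v})=({\bm H},{\bm u})$ in the variational identity, the coupling terms combine into a single boundary pairing whose imaginary (or real) part, after using the relation between the two scalings, is a nonnegative multiple of $\I(\hat b_1\ov{\hat b}_2)$ times $\|\nu\times{\bm H}\|^2$ or of $\Rt[\mathcal{E}({\bm u},\ov{\bm u})]$-type quantities; testing against the energy identity forces ${\bm H}=0$ and ${\bm u}=0$ in $\Om$. (Concretely: Green's formula gives $\int_{\pa\Om}\nu\times\ov{\bm u}\cdot(\ldots)$ equal to $\int_\Om[\mathcal{E}({\bm u},\ov{\bm u})+|{\bm u}|^2]$ and simultaneously to a multiple of $\int_\Om[|\curl{\bm H}|^2+|{\bm H}|^2]$, and the sign condition makes both integrals vanish.) The step I expect to be the main obstacle is the compactness of $\mathcal{K}$, specifically verifying that elements of $X_0^\Om$ have $H^{1/2}$-regularity up to the Lipschitz boundary $\pa\Om$ so that their tangential traces embed compactly into ${\bm L}^2(\pa\Om)$; for a merely Lipschitz boundary one must be slightly careful invoking the regularity theory, but since $\mathrm{div}\,{\bm H}=0$ in $\Om$ and $\nu\cdot{\bm H}=0$ on $\pa\Om$ the result of \cite{P03} still applies and delivers the needed gain of $H^{1/2}$ regularity (or at least $H^{1/2+s}_{loc}$), after which the trace theorem and Rellich's theorem finish the compactness argument.
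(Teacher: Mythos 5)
Your plan transplants the full Fredholm machinery of Theorem \ref{thm3.8} (Helmholtz-type decomposition, compact perturbation, separate injectivity argument) into the bounded domain $\Om$, but this misses the whole point of the auxiliary problem (\ref{4.1a}): its zeroth-order terms carry the signs $+{\bm H}$ and $-{\bm u}$ precisely so that the variational form is coercive on all of $H(\curl,\Om)\times{\bm H}^1(\Om)$, and the paper's proof is a direct Lax--Milgram argument with no decomposition and no compactness whatsoever. Indeed, testing with $({\bm H},{\bm u})$ itself, the Maxwell block contributes exactly $\|{\bm H}\|^2_{H(\curl,\Om)}$; the elastic block contributes $\Rt\big(-{\rm i}\ka/(\hat b_1\ov{\hat b}_2)\big)\int_\Om(\mathcal{E}({\bm u},\ov{\bm u})+|{\bm u}|^2)\,{\rm d}x$, whose prefactor equals $-\ka\,\I(\hat b_1\ov{\hat b}_2)/|\hat b_1\hat b_2|^2>0$ under the hypothesis and which then controls $\|{\bm u}\|^2_{{\bm H}^1(\Om)}$ via Korn's inequality; and the two boundary cross terms are of the form ${\rm i}\ka\big(\ov{c}/\hat b_2+c/\ov{\hat b}_2\big)$ with $c=\langle\g_t{\bm H},\g_T\ov{\bm u}\rangle$, hence purely imaginary, exactly as in (\ref{3.15a}). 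Existence, uniqueness and the stability estimate all follow from Lax--Milgram in one stroke.

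Beyond being unnecessary, your route as written has a genuine gap at the very step you flag. You place the boundary pairings involving the $X_0^\Om$-component of ${\bm H}$ into the compact part $\mathcal{K}$, which requires the tangential trace $X_0^\Om\to{\bm L}^2(\pa\Om)$ to be compact. For a merely Lipschitz $\pa\Om$ (which is all the lemma assumes) the embedding of divergence-free fields with vanishing normal trace into ${\bm H}^{1/2}(\Om)$ is sharp, and $H^{1/2}$ interior regularity gives neither a bounded nor a compact trace into $L^2(\pa\Om)$ --- the trace theorem needs strictly more than $1/2$. Theorem 3.50 of \cite{P03}, which the paper invokes in Section \ref{sec3}, yields $H^{1/2+s}$ with $s>0$ only under additional boundary smoothness, so ``the result still applies'' is not justified here. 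The repair is exactly the observation above: the cross terms never need to be compact, because their real part cancels identically for the full field ${\bm H}$ (not merely for its gradient part), so they belong in the coercive principal part and the decomposition can be dispensed with entirely.
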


\begin{proof}
By using Green's formula, the problem (\ref{4.1a}) can be reformulated as the variational problem: 
find $({\bm H},{\bm u})\in H(\curl,\Om)\times{\bm H}^1(\Om)$ such that
\be\label{4.2b}
\hat{\mathcal{A}}(({\bm H},{\bm u}),({\bm w},{\bm v}))=\hat{\mathcal{F}}(({\bm w},{\bm v}))
\quad{\rm for\;}\;\;({\bm w},{\bm v})\in H(\curl,\Om)\times{\bm H}^1(\Om),
\en
where
\ben
\hat{\mathcal{A}}(({\bm H},{\bm u}),({\bm w},{\bm v}))
&:=&\Int_{\Om}(\curl{\bm H}\cdot\curl\ov{\bm w}+{\bm H}\cdot\ov{\bm w}){\,\rm d}x 
+ \frac{-{\rm i}\ka}{\hat{b}_1\ov{\hat{b}}_2}\Int_{\Om}(\mathcal{E}({\bm u},\ov{\bm v})
+{\bm u}\cdot\ov{\bm v})){\,\rm d}x\\
&&+\frac{{\rm i}\ka}{\hat{b}_2}\langle\g_t\ov{\bm w},\g_T{\bm u}
\rangle_{H^{-\frac{1}{2}}_{\Div}\times H^{-\frac{1}{2}}_{\Curl}}
-\frac{-{\rm i}\ka}{\ov{\hat{b}}_2}\langle\g_t{\bm H},\g_T\ov{\bm v}
\rangle_{H^{-\frac{1}{2}}_{\Div}\times H^{-\frac{1}{2}}_{\Curl}},\\
\hat{\mathcal{F}}(({\bm w},{\bm v}))
&:=&\frac{-{\rm i}\ka}{\hat{b}_1\ov{\hat{b}}_2}\langle{\bm h}_1,\ov{\bm v}
\rangle_{{\bm H}^{-\frac{1}{2}}\times{\bm H}^{\frac{1}{2}}}
-\langle{\bm h}_2,\g_T\ov{\bm w}\rangle_{H^{-\frac{1}{2}}_{\Div}\times H^{-\frac{1}{2}}_{\Curl}}\\
&&+\Int_{\Om}(\xi_1\cdot\ov{\bm w}+\frac{{\rm i}\ka}{\hat{b}_1\ov{\hat{b}}_2}\xi_2\cdot\ov{\bm v}){\,\rm d}x.
\enn
By the Cauchy-Schwarz inequality and the trace theorem, there exists a constant $C>0$ such that
\ben
&&|\hat{\mathcal{A}}(({\bm H},{\bm u}),({\bm w},{\bm v}))|
\le C\|({\bm H},{\bm u})\|_{H(\curl,\Om)\times{\bm H}^1(\Om)}
\|({\bm w},{\bm v})\|_{H(\curl,\Om)\times{\bm H}^1(\Om)},\\
&&|\hat{\mathcal{F}}(({\bm w},{\bm v}))|\le C\left[\|\xi_1\|_{{\bm L}^2(\Om)}
+\|\xi_2\|_{{\bm L}^2(\Om)}\right.\\
&&\qquad\qquad\qquad\quad\;\;\;\left.+\|{\bm h}_1\|_{{\bm H}^{-\frac{1}{2}}(\pa\Om)}
+\|{\bm h}_2\|_{H^{-\frac{1}{2}}_{\Div}(\pa\Om)}\right]
\|({\bm w},{\bm v})\|_{H(\curl,\Om)\times{\bm H}^1(\Om)},
\enn
which implies that both $\hat{\mathcal{A}}(\cdot,\cdot)$ and $\hat{\mathcal{F}}(\cdot)$ are bounded
in $H(\curl,\Om)\times{\bm H}^1(\Om)$.
	
Now, let $({\bm w},{\bm v}):=({\bm H},{\bm u})$ in (\ref{4.2b}). Then it follows that 
\ben
\hat{\mathcal{A}}(({\bm H},{\bm u}),({\bm H},{\bm u}))
&=&\Int_{\Om}(\curl{\bm H}\cdot\curl\ov{\bm H}+|{\bm H}|^2
+\frac{-{\rm i}\ka}{\hat{b}_1\ov{\hat{b}}_2}(\mathcal{E}({\bm u},\ov{\bm u})+|{\bm u}|^2)){\,\rm d}x\\
&&+\frac{{\rm i}\ka}{\hat{b}_2}\langle\g_t\ov{\bm H},\g_T{\bm u}
\rangle_{H^{-\frac{1}{2}}_{\Div}\times H^{-\frac{1}{2}}_{\Curl}}
-\frac{-{\rm i}\ka}{\ov{\hat{b}}_2}\langle\g_t{\bm H},\g_T\ov{\bm u}
\rangle_{H^{-\frac{1}{2}}_{\Div}\times H^{-\frac{1}{2}}_{\Curl}}.
\enn
Thus,
\ben
\Rt\left[\hat{\mathcal{A}}(({\bm H},{\bm u}),({\bm H},{\bm u}))\right]
\ge c\left[\|{\bm H}\|^2_{H(\curl,\Om)}+\|{\bm u}\|^2_{{\bm H}^1(\Om)}\right]
\enn
for some constant $c>0$. The required result then follows from the Lax-Milgram lemma.
\end{proof}

\subsection{Uniqueness in recovering the elastic body}

Assume that $D$ and $\wi{D}$ are two elastic bodies corresponding to with the electromagnetic far-field 
patterns (${\bm E}^{s}_{\infty}(\hat{x},d,p)$,${\bm H}^{s}_{\infty}(\hat{x},d,p)$) and
($\wi{\bm E}^{s}_{\infty}(\hat{x},d,p)$,$\wi{\bm H}^{s}_{\infty}(\hat{x},d,p)$), 
generated by the incident plane waves given in (\ref{2.1a}) with the incident direction $d\in\mat{S}^2$ 
and the polarization vector $p\in\R^3$.

\begin{theorem}\label{thm4.2}
If ${\bm H}^{s}_{\infty}(\hat{x},d,p)=\wi{\bm H}^{s}_{\infty}(\hat{x},d,p)$ for all $\hat{x},d\in\mat{S}^2$ 
and $p\in\mat{R}^3$, then $D=\wi{D}$.
\end{theorem}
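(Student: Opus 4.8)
The plan is to use the classical contradiction-via-Rellich strategy together with the well-posed auxiliary system \eqref{4.1a} of Lemma \ref{lem4.1} and the uniform $H(\curl,\cdot)\times{\bm H}^1(\cdot)$ estimate it provides. Suppose $D\neq\wi D$. Since ${\bm H}^s_\infty=\wi{\bm H}^s_\infty$ for all $\hat x,d\in\Sp^2$ and $p\in\R^3$, by \eqref{2.6a} the electric far-field patterns coincide too, and Rellich's lemma together with the unique continuation principle (as in the proof of Theorem \ref{thm3.1}) gives ${\bm E}^s={\bm E}$, ${\bm H}^s=\wi{\bm H}^s$ in the unbounded component $G$ of $\R^3\se\ov{(D\cup\wi D)}$. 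Without loss of generality there is a point $x_0\in\pa D$ with $x_0\notin\ov{\wi D}$, and a small ball $B_\de(x_0)$ with $B_\de(x_0)\cap\wi D=\emptyset$. The idea, following \cite{JBH18}, is to test the model with incident fields generated by a family of electric dipoles located at points $z_n\to x_0$ from outside $\ov D$ along the normal, i.e. point sources with a weak singularity, and to exploit the mismatch: near $\wi D$ the total field stays bounded (solves a nice problem there), whereas the field scattered by $D$ blows up as $z_n\to x_0\in\pa D$.

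The key steps, in order, are as follows. First, establish the coincidence of scattered fields in $G$ and set up the geometry ($x_0$, $\de$, the normal sequence $z_n$). Second, for each $z_n$ let $({\bm E}^s_n,{\bm H}^s_n)$ (resp.\ $(\wi{\bm E}^s_n,\wi{\bm H}^s_n)$) be the scattered field for $D$ (resp.\ $\wi D$) with incident field the dipole field centered at $z_n$; by the far-field/near-field identity from step one these agree in $G$, so the total fields ${\bm H}_n$ and $\wi{\bm H}_n$ agree in $G$ as well. Third — this is where Lemma \ref{lem4.1} enters — on a fixed small domain $\Om\subset B_\de(x_0)$ straddling $\pa D$ but disjoint from $\ov{\wi D}$, the pair $({\bm H}_n,{\bm u}_n)$ restricted to $\Om\cap D^c$ and $\Om\cap D$ solves a problem of the form \eqref{4.1a} (after absorbing the smooth dipole incident field and lower-order terms into the right-hand sides $\xi_1,\xi_2$ and into $\mathbf{h}_1,\mathbf{h}_2$ via the trace data coming from the $\wi D$-side, where everything is regular and uniformly bounded because $z_n\to x_0\notin\ov{\wi D}$). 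Hence by the a priori estimate of Lemma \ref{lem4.1}, $\|{\bm H}_n\|_{H(\curl,\Om)}+\|{\bm u}_n\|_{{\bm H}^1(\Om)}$ is bounded uniformly in $n$. Fourth, derive the contradiction: on the other hand, the presence of $\pa D$ forces the $D$-scattered field ${\bm H}^s_n$ (equivalently ${\bm H}_n$) to be unbounded in any ${\bm L}^2$-neighborhood of $x_0\in\pa D$ as $z_n\to x_0$, since the incident dipole singularity at $z_n$ cannot be cancelled by a field that is smooth up to $\pa D$ from the $D$-side while simultaneously matching the regular $\wi D$-side data. This contradicts the uniform bound, so $D=\wi D$.

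Concretely, for the blow-up one uses a well-chosen polarization of the dipole and a duality/reciprocity identity: pairing the scattered field with the dipole field and passing $z_n\to x_0$ produces a quantity controlled above by the uniform bound but diverging because of the $|x-z_n|^{-1}$-type singularity of the fundamental solution; this is exactly the mechanism in \cite{JBH18}, transplanted to the coupled Maxwell--Navier setting. One must check that the transmission conditions \eqref{2.5a}--\eqref{2.5b} with the dipole incident field indeed put the localized problem into the template \eqref{4.1a}: the interior Navier equation becomes $\na\cdot(\hat{\mathcal C}:\na{\bm u})-{\bm u}=\xi_2$ after moving $(\rho\om^2+1){\bm u}$ to the right, $\curl\curl{\bm H}-\ka^2{\bm H}=0$ becomes $\curl\curl{\bm H}+{\bm H}=\xi_1$ with $\xi_1=(\ka^2+1){\bm H}$, and the boundary data $\mathbf{h}_1,\mathbf{h}_2$ on $\pa\Om$ are the traces of the known, uniformly bounded field coming from $G$ and the $\wi D$-side. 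The sign condition $\I(b_1\ov b_2)<0$ (assumed throughout for well-posedness, Theorem \ref{thm3.9}) guarantees $\I(\hat b_1\ov{\hat b}_2)<0$ with $\hat b_j=b_j$, so Lemma \ref{lem4.1} applies.

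I expect the main obstacle to be Step three: verifying that, after the normalization, the localized coupled system genuinely falls under Lemma \ref{lem4.1} with right-hand sides bounded \emph{uniformly in $n$}, and in particular that the boundary terms on the artificial boundary $\pa\Om$ inherit a uniform bound. This hinges on the fact that on $\pa\Om$ the field equals the field scattered by $\wi D$ plus the (smooth, $z_n$-dependent but for $z_n$ near $x_0\notin\ov{\wi D}$ uniformly bounded) dipole field, so one needs a uniform exterior estimate for the $\wi D$-problem as the source approaches a point away from $\wi D$ — this should follow from the well-posedness of the forward problem and continuous dependence on the (smooth, uniformly bounded) incident data, but it requires care near the interface $\pa D\cap\pa\Om$. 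The complementary blow-up in Step four is standard once the dipole family and polarization are chosen as in \cite{JBH18}.
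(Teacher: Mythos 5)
Your overall strategy --- argue by contradiction, send point sources toward $\pa D$ from outside, invoke the well-posed coupled system of Lemma \ref{lem4.1} on a small domain near $z_*$, and play a uniform bound against a blow-up --- is the paper's strategy, but two load-bearing steps are misconstructed or missing. First, the auxiliary problem: the system (\ref{4.1a}) requires \emph{both} the Maxwell field and the Navier field on the \emph{same} domain $\Om$, with the coupled conditions on $\pa\Om$. Your domain ``straddling $\pa D$'', with ${\bm H}_n$ on $\Om\cap D^c$ and ${\bm u}_n$ on $\Om\cap D$, does not fit that template (the boundary conditions of (\ref{4.1a}) involve traces of both fields on all of $\pa\Om$). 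The paper instead takes $\Om=D_0\subset D\se\ov{\wi D}$ with $\pa D_0$ containing a piece of $\pa D$ near $z_*$, and pairs the elastic field ${\bm u}(\cdot,z_j,q)$ of the $D$-problem with the \emph{other} obstacle's total magnetic field $\wi{\bm H}(\cdot,z_j,q)$, which solves the Maxwell equation throughout $D_0$ because $D_0\cap\ov{\wi D}=\emptyset$ and $z_j\notin D$. The coincidence of the two scattered fields in $G_0$ (obtained via the mixed reciprocity relations (\ref{4.5b})--(\ref{4.6b}), which you invoke only implicitly) turns the transmission conditions on $\pa D_0\cap\pa D$ into homogeneous data for this mixed pair. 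Lemma \ref{lem4.1} then bounds $\|\wi{\bm H}(\cdot,z_j,q)\|_{H(\curl,D_0)}$ uniformly, and the contradiction is that $\wi{\bm H}={\bm H}^i+\wi{\bm H}^s$ with $\wi{\bm H}^s$ uniformly bounded in $D_0$ (positive distance to $\wi D$) while the normalized dipole ${\bm H}^i(\cdot,z_j,q)$ blows up in $H(\curl,D_0)$. Your step four instead asserts that the $D$-scattered field ${\bm H}^s_n$ must blow up near $x_0$; that is not the mechanism used, and it is not clear it is even true as stated, since on the $D$-side the scattered field may partially compensate the incident singularity.

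Second, and more seriously, the data of the auxiliary problem involve ${\bm u}(\cdot,z_j,q)$ itself (through $\xi_{2,j}$ and ${\bm h}_{1,j}=T{\bm u}-b_1\nu\times\wi{\bm H}$), so one needs a uniform ${\bm H}^1(D)$ bound on the elastic field as the source $z_j$ approaches $\pa D$. This does not follow from ``continuous dependence on smooth incident data'' or from the distance to $\wi D$: the trace of the dipole on $\pa D$ blows up as $z_j\to z_*$. Roughly half of the paper's proof is devoted to exactly this point: the dipole is normalized by $\|\curl(q\Phi(\cdot,z_j))\|_{{\bm L}^2(\pa D)}$, a mirror dipole centered at $y_j=z_*-(\delta/j)\nu(z_*)\in D$ is subtracted from ${\bm H}^s$, the polarization is fixed as $q=\nu(z_*)$, and the resulting boundary data ${\bm f}_{1j},{\bm f}_{2j}$ of the modified problem (\ref{4.13bb}) are shown to be uniformly bounded through the cancellation estimates (\ref{4.15b})--(\ref{4.22b}), using $|\nu(x)-\nu(z_*)|=O(|x-z_*|)$ and the Taylor bound $|x_3|\le C(x_1^2+x_2^2)$ on $\pa D$. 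Only then does Theorem \ref{thm3.9} deliver the uniform estimate (\ref{4.23b}). Without this step your claim that the right-hand sides of the localized problem are uniformly bounded in $n$ is unsupported, and this is the main technical content of the theorem.
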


\begin{proof}
Suppose $D\not=\wi{D}$. Then there would exist $z_*\in\pa D\se\pa\wi{D}$ and a small ball $B$ centered 
at $z_*$ such that
\ben
z_j:=z_*+\frac{\delta}{j}\nu(z_*)\in B, \quad {\rm for\;}\; j=1,2,\cdots
\enn
for small enough $\delta>0$ and $B\cap\ov{\wi{D}}=\emptyset$; see Figure \ref{fig:labe2} 
for the geometric description.
\begin{figure}[htp]
\centering
\includegraphics[scale=0.40]{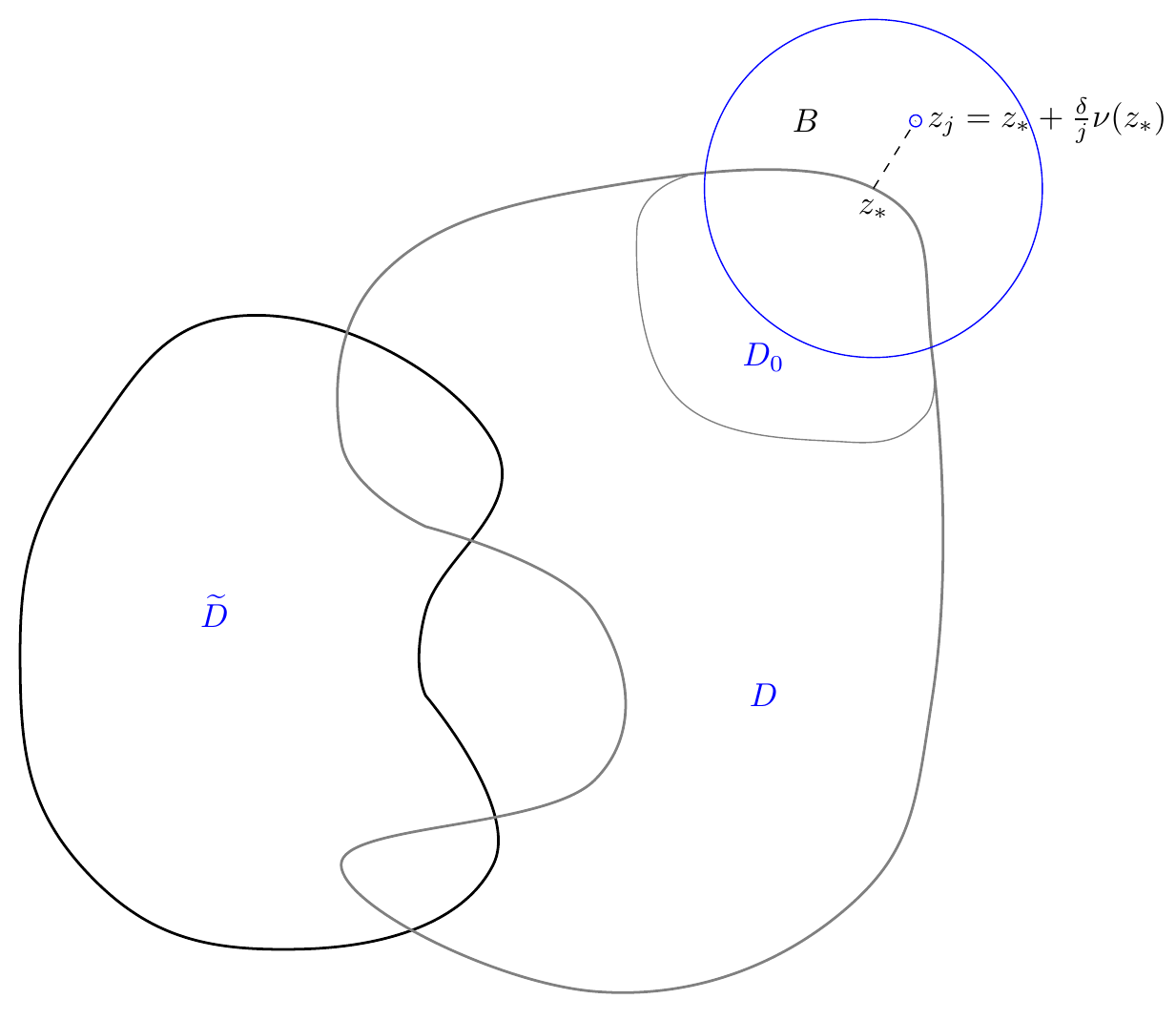}
\caption{Two different elastic bodies}
\vskip -0.8truecm
\hspace{0.2in}\label{fig:labe2}
\end{figure}
	
Consider the scattering problem (\ref{3.2a}) with the boundary data ${\bm f}_1$ and ${\bm f}_2$ 
induced by the electric dipoles
\be\label{4.3b}
&&{\bm H}^i(x,z_j,q)=\curl(q\Phi(x,z_j))/\|\curl(q\Phi(x,z_j))\|_{{\bm L}^2(\pa D)},\\ \label{4.4b}
&&{\bm E}^i(x,z_j,q)=-\frac{1}{\rm i\ka}\curl{\bm H}^i(x,z_j,q)
\en
for $q\in\R^3$, where $\Phi(\cdot,\cdot)$ is the fundamental solution to the three-dimensional 
Helmhlotz equation given by
\ben
\Phi(x,z):=\frac{1}{4\pi}\frac{e^{{\rm i}\ka|x-z| }}{|x-z|},\qquad\; x\not= z.
\enn
By Theorem \ref{thm3.9} we know that, for each $j\in\N$ the problem (\ref{3.2a}) has 
a unique solution $({\bm E}^s(\cdot,z_j,q),{\bm H}^s(\cdot,z_j,q),{\bm u}(\cdot,z_j,q))
\in H_{loc}(\curl,\R^3\se\ov{D})\times H_{loc}(\curl,\R^3\se\ov{D})\times{\bm H}^1(D)$ 
with respect to the elastic body $D$ and 
a unique solution $(\wi{{\bm E}}^s(\cdot,z_j,q),\wi{{\bm H}}^s(\cdot,z_j,q),\wi{{\bm u}}(\cdot,z_j,q))
\in H_{loc}(\curl,\R^3\se\ov{\wi{D}})\times H_{loc}(\curl,\R^3\se\ov{\wi{D}})\times{\bm H}^1(\wi{D})$
with respect to the elastic body $\wi{D}$. Define the total electromagnetic fields as follows:
\ben
&&{\bm E}(\cdot,z_j,q): = {\bm E}^i(x,z_j,q) + {\bm E}^s(\cdot,z_j,q)\qquad {\rm in\;}\R^3\se\ov{D},\\
&&{\bm H}(\cdot,z_j,q): = {\bm H}^i(x,z_j,q) + {\bm H}^s(\cdot,z_j,q)\qquad {\rm in\;}\R^3\se\ov{D},\\
&&\wi{{\bm E}}(\cdot,z_j,q):=\wi{{\bm E}}^i(x,z_j,q)+\wi{{\bm E}}^s(\cdot,z_j,q)
\qquad{\rm in\;}\R^3\se\ov{\wi{D}},\\
&&\wi{{\bm H}}(\cdot,z_j,q):=\wi{{\bm H}}^i(x,z_j,q)+\wi{{\bm H}}^s(\cdot,z_j,q)
\qquad{\rm in\;}\R^3\se\ov{\wi{D}}.
\enn
	
We now prove that the following mixed reciprocity relation holds for the scattering solutions of 
the problem (\ref{3.2a}) associated with the incident plane wave given in (\ref{2.1a}) and 
the electric dipoles given in (\ref{4.3b}) and (\ref{4.4b}):
\be\label{4.5b}
\frac{1}{c_j}4\pi p\cdot{\bm E}^s_{\infty}(-d,z_j,q)=q\cdot{\bm E}^s(z_j,d,p),\\\label{4.6b}
\frac{1}{c_j}4\pi p\cdot\wi{{\bm E}}^s_{\infty}(-d,z_j,q)=q\cdot\wi{{\bm E}}^s(z_j,d,p),
\en
where $\ds c_j:=1/\|\curl(q\Phi(x,z_j))\|_{{\bm L}^2(\pa D)}$, $\ds{\bm E}^s_{\infty}(-d,z_j,q)$ 
and $\ds\wi{{\bm E}}^s_{\infty}(-d,z_j,q)$ are the electric far-field patterns corresponding 
to $D$ and $\wi{D}$, respectively. We only prove (\ref{4.5b}) since (\ref{4.6b}) can be shown similarly.
First, use the vector Gauss divergence theorem and the radiation condition (\ref{2.4a}) to obtain 
that for each $p,q\in\mat{R}^3$,
\ben\label{4.7b}
&&\Int_{\pa D}(\nu\times{\bm E}^i(\cdot,z_j,q)\cdot{\bm H}^i(\cdot,d,p)
+\nu\times{\bm H}^i(\cdot,z_j,q)\cdot{\bm E}^i(\cdot,d,p)){\,\rm d}s=0,\\ \label{4.8b}
&&\Int_{\pa D}(\nu\times{\bm E}^s(\cdot,z_j,q)\cdot{\bm H}^s(\cdot,d,p)
+\nu\times{\bm H}^s(\cdot,z_j,q)\cdot{\bm E}^s(\cdot,d,p)){\,\rm d}s=0.
\enn
Next, by the Stratton-Chu formula (cf. \cite{DR13}) we get
\ben\no
&&4\pi p\cdot{\bm E}^s_{\infty}(-d,z_j,q) 
=\Int_{\pa D}(\nu\times{\bm E}^s_e(\cdot,z_j,q)\cdot{\bm H}^i(\cdot,d,p)
+ \nu\times{\bm H}^s(\cdot,z_j,q)\cdot{\bm E}^i(\cdot,d,p)){\,\rm d}s, \\ \no
&&q\cdot{\bm E}^s(z_j,d,p) 
=\frac{1}{c_j}\Int_{\pa D}(\nu\times{\bm E}^s(\cdot,d,p)\cdot{\bm H}^i(\cdot,z_j,q)
+\nu\times{\bm H}^s(\cdot,d,p)\cdot{\bm E}^i(\cdot,z_j,q)){\,\rm d}s.
\enn
Combining the above four equations with the transmission conditions yields
\ben
&&4\pi p\cdot{\bm E}^s_{\infty}(-d,z_j,q)-c_jq\cdot{\bm E}^s(z_j,d,p)\\
&&=\Int_{\pa D}\left[\nu\times{\bm E}(\cdot,z_j,q)\cdot{\bm H}(\cdot,d,p)
+\nu\times{\bm H}(\cdot,z_j,q)\cdot{\bm E}(\cdot,d,p)\right]{\rm d}s\\
&&=-\frac{1}{b_1b_2}\Int_{\pa D}\left[{\bm u}(\cdot,z_j,q)\cdot T{\bm u}(\cdot,d,p)
-{\bm u}(\cdot,d,p)\cdot T{\bm u}(\cdot,z_j,q)\right]{\rm d}s \\
&&=-\frac{1}{b_1b_2}\Int_D\left[{\bm u}(\cdot,z_j,q)\cdot(\na\cdot( \mathcal{C}:\na{\bm u}(\cdot,d,p)))
-{\bm u}(\cdot,d,p)\cdot(\na\cdot(\mathcal{C}:\na{\bm u}(\cdot,z_j,q)))\right]{\rm d}x\\
&&=0,
\enn
that is, (\ref{4.5b}) holds.
	
Since ${\bm H}^{s}_{\infty}(\hat{x},d,p)=\wi{\bm H}^{s}_{\infty}(\hat{ x},d,p)$ for all $\hat{x},d\in\Sp^2$ 
and $p\in\R^3$, we obtain by (\ref{2.6a}) and Rellich's lemma that for each $p\in\R^3$, 
\ben
{\bm E}^{s}(x,d,p)=\wi{\bm E}^{s}(x,d,p),\quad {\bm H}^{s}(x,d,p)=\wi{\bm H}^{s}(x,d,p)\quad\;x\in G_0,
\enn
where $G_0$ denotes the unbounded component of $\mat{R}^3\se\ov{(D\cup\wi{D})}$. 
This, together with the mixed reciprocity relations (\ref{4.5b}) and (\ref{4.6b}) and Rellich's lemma again, 
implies that for each $q\in\R^3$,
\be\label{4.11b}
{\bm E}^s(x,z_j,q)=\wi{\bm E}^s(x,z_j,q),\quad{\bm H}^s(x,z_j,q)=\wi{\bm H}^s(x,z_j,q)\quad\;x\in G_0.
\en

We now prove the uniform boundedness in an appropriate Sobolev space of both ${\bm H}^s(\cdot,z_j,q)$ 
and ${\bm u}(\cdot,z_j,q)$ as $j\to\infty$. To this end, define the function
\be\label{4.12b}
\hat{\bm H}_j(x):={\bm H}^s(x,z_j,q)-\curl[q\Phi(x,y_j)]/\|\curl[q\Phi(x,z_j)]\|_{{\bm L}^2(\pa D)},
\;x\in\R^3\se\ov{D},\;\;\quad
\en
where $y_j:=z_*-(\delta/j)\nu(z_*)\in D$ for $j\in\N$. It is easy to verify that 
$({\bm u}(\cdot,z_j,q),\hat{\bm H}_j(\cdot))$ satisfies the scattering problem
\be\label{4.13bb}
\left\{
\begin{array}{ll}
\ds\curl\curl\hat{\bm H}_j -\ka^2\hat{\bm H}_j=0& \qquad{\rm in\;}\;\;D^c,\\[1mm]
\ds\na\cdot(\mathcal{C}:\na{\bm u}(\cdot,z_j,q))+\rho\om^2{\bm u}(\cdot,z_j,q)=0&\qquad{\rm in\;}\;\;D,\\[1mm]
\ds T{\bm u}(\cdot,z_j,q))-b_1\nu\times\hat{\bm H}_j={\bm f}_{1j} &\qquad{\rm on\;}\;\;\pa D,\\[1mm]
\ds\nu\times\curl\hat{\bm H}_j+\frac{{\rm i}\ka}{b_2}\nu\times{\bm u}(\cdot,z_j,q))
={\rm i}\ka{\bm f}_{2j} &\qquad{\rm on\;}\;\;\pa D,\\[1mm]
\ds\lim_{r\to\infty}r(\hat{x}\times\curl\hat{\bm H}_j+{\rm i}\ka\hat{\bm H}_j)=0, &\qquad r=|x|,
\end{array}
\right.
\en
where the data ${\bm f}_{1j}$ and ${\bm f}_{2j}$ are given by
\ben
{\bm f}_{1j}(x)&:=&\nu\ti{\bm H}^i(x,z_j,q)+\frac{\nu\times\curl[q\Phi(x,y_j)]}
{\|\curl[q\Phi(x,z_j)]\|_{{\bm L}^2(\pa D)}} \\
&=&\frac{\nu\times\curl[q\Phi(x, z_j)]}{\|\curl[q\Phi(x,z_j)]\|_{{\bm L}^2(\pa D)}}
+\frac{\nu\times\curl[q\Phi(x, y_j)]}{\|\curl[q\Phi(x,z_j)]\|_{{\bm L}^2(\pa D)}},\\[1.5mm]
{\bm f}_{2j}(x)&:=&\nu\times{\bm E}^i(x,z_j,q)-\frac{1}{{\rm i}\ka}
\frac{\nu\times\curl\curl[q\Phi( x,y_j)]}{\|\curl[q\Phi( x,z_j)]\|_{{\bm L}^2(\pa D)}}\\
&=&-\frac{1}{{\rm i}\ka}\left(\frac{\nu\times\curl\curl[q\Phi(x,z_j)]}{\|\curl[q\Phi(x,z_j)]\|_{{\bm L}^2(\pa D)}}
+\frac{\nu\times\curl\curl[q\Phi(x,y_j)]}{\|\curl[q\Phi(x,z_j)]\|_{{\bm L}^2(\pa D)}}\right).
\enn
Noting that $\Div(\nu\times{\bm f})=-\nu\cdot\curl{\bm f}$ and $\curl\curl{\bm f}=(-\Delta+\na\dive){\bm f}$ 
for a smooth function ${\bm f}$, one immediately has
\be\label{4.13b}
\|{\bm f}_{1j}\|_{{\bm L}^2(\pa D)} + \|\Div{\bm f}_{2j}\|_{{L}^2(\pa D)}\le C_1
\en
uniformly for all $j\in\N$, where $C_1>0$ is a fixed positive constant.
Moreover, by the definition of $z_j$ and $y_j$, and on taking $q:=\nu(z_*)$, we can further prove that 
${\bm f}_{2j}$ is uniformly bounded in ${\bm L}^2(\pa D)$ for all $j\in\N$, that is,
\be\label{4.14b}
\|{\bm f}_{2j}\|_{{\bm L}^2(\pa D)}\le C_2
\en
for some fixed constant $C_2>0$. In fact, a direct calculation shows that
\be\no
&&\frac{1}{\|\curl[q\Phi(x,z_j)]\|_{{\bm L}^2(\pa D)}}\{\nu\times\curl^2[q\Phi(x,z_j)]
+\nu\times\curl^2[q\Phi(x, y_j)]\}\\ \no
&&=\frac{\nu\times\grad\dive[q\Phi(x,z_j)+q\Phi(x, y_j)]+\ka^2\nu\times[q\Phi(x,z_j)
+q\Phi(x,y_j)]}{\|\curl[q\Phi(x,z_j)]\|_{{\bm L}^2(\pa D)}}\\ \no
&&=\frac{\nu\times\grad\grad[\Phi(x,z_j)+\Phi(x,y_j)]q}{\|\curl[q\Phi(x,z_j)]\|_{{\bm L}^2(\pa D)}}
+\frac{\ka^2\nu\times q[\Phi(x,z_j)+\Phi(x,y_j)]}{\|\curl[q\Phi(x,z_j)]\|_{{\bm L}^2(\pa D)}}\\ \label{4.15b}
&&=:I_j+II_j.
\en
It is easy to see that $II_j\in{\bm L}^2(\pa D)$ is uniformly bounded for $j\in\mat{N}$ since 
the fundamental solution $\Phi(\cdot,\cdot)$ is weakly singular.
To estimate $I_j$, without loss of generality, we may take $z^*=(0,0,0)^T$ and $\nu(z^*)=(0,0,1)^T$. 
Since $q=\nu(z^*)$, we have
\ben
I_j&=&\frac{1}{\|\curl(q\Phi(x,z_j))\|_{{\bm L}^2(\pa D)}}\nu(x)\ti
\begin{pmatrix}
\pa^2_{13}\Phi(x,z_j)+\pa^2_{13}\Phi(x,y_j)\\
\pa^2_{23}\Phi(x,z_j)+\pa^2_{23}\Phi(x,y_j)\\
\pa^2_{33}\Phi(x,z_j)+\pa^2_{33}\Phi(x,y_j)
\end{pmatrix}\\
&=&\frac{1}{\|\curl(q\Phi(x, z_j))\|_{{\bm L}^2(\pa D)}}\nu(z_*)\ti
\begin{pmatrix}
	\pa^2_{13}\Phi(x,z_j)+\pa^2_{13}\Phi(x,y_j)\\
	\pa^2_{23}\Phi(x,z_j)+\pa^2_{23}\Phi(x,y_j)\\
	\pa^2_{33}\Phi(x,z_j)+\pa^2_{33}\Phi(x,y_j)
\end{pmatrix}\\
&&+\frac{1}{\|\curl(q\Phi(x,z_j))\|_{{\bm L}^2(\pa D)}}(\nu(x)-\nu(z_*))\ti
\begin{pmatrix}
	\pa^2_{13}\Phi(x,z_j)+\pa^2_{13}\Phi(x,y_j)\\
	\pa^2_{23}\Phi(x,z_j)+\pa^2_{23}\Phi(x,y_j)\\
	\pa^2_{33}\Phi(x,z_j)+\pa^2_{33}\Phi(x,y_j)
\end{pmatrix}\\
&=&\frac{1}{\|\curl(q\Phi(x, z_j))\|_{{\bm L}^2(\pa D)}}
\begin{pmatrix}
	-\pa^2_{23}\Phi(x,z_j)-\pa^2_{23}\Phi(x,y_j) \\
	\pa^2_{13}\Phi(x,z_j)+\pa^2_{13}\Phi(x,y_j)\\
	0
\end{pmatrix}\\
&&+\frac{1}{\|\curl(q\Phi(x,z_j))\|_{{\bm L}^2(\pa D)}}(\nu(x)-\nu(z_*))\ti
\begin{pmatrix}
	\pa^2_{13}\Phi(x,z_j)+\pa^2_{13}\Phi(x,y_j)\\
	\pa^2_{23}\Phi(x,z_j)+\pa^2_{23}\Phi(x,y_j)\\
	\pa^2_{33}\Phi(x,z_j)+\pa^2_{33}\Phi(x,y_j)
\end{pmatrix}\\
&=:& I_j^{(1)}+I_j^{(2)}.
\enn
Let $x=(x_1,x_2,x_3)^T$ and $z=(z^{(1)},z^{(2)},z^{(3)})^T$. Then a direct calculation gives
\be\no
4\pi^2\pa^2_{\ell3}\Phi(x,z)
&=&-\ka^2\frac{(x_3-z^{(3)})(x_\ell-z^{(\ell)})e^{{\rm i}\ka|x-z|}}{|x-z|^3}-3{\rm i}\ka \frac{(x_3-z^{(3)})(x_j-z^{(\ell)})e^{{\rm i}\ka|x-z|}}{|x-z|^4 }\\ \label{4.16b}
&&+\;3\frac{(x_3-z^{(3)})(x_\ell-z^{(\ell)})e^{{\rm i}\ka|x-z|}}{|x-z|^5}\qquad{\rm for}\;\;\ell=1,2,
\en
and
\be\no
4\pi^2\pa^2_{\ell 3}\Phi(x,z)
&=&-\ka^2\frac{(x_3-z^{(3)})^2e^{{\rm i}\ka|x-z| }}{ |x-z|^3 } + {\rm i}\ka \frac{e^{{\rm i}\ka|x-z| }}{ |x-z|^2 }
-3{\rm i}\ka \frac{(x_3-z^{(3)})^2e^{{\rm i}\ka|x-z| }}{ |x-z|^4 }\\   \label{4.17b}
&&\;-\frac{e^{{\rm i}\ka|x-z|}}{|x-z|^3}+3\frac{(x_3-z^{(3)})^2e^{{\rm i}\ka|x-z|}}{|x-z|^5}\qquad{\rm for}\;\;\ell=3.
\en
Since $\pa D$ is $C^2$ smooth, we have the unit normal vector function $\nu(x)\in C^1(\pa D)$, and thus $|\nu(x)-\nu(z_*)|=O(|x-z_*|)$ for all $x\in\pa D$.
This, combined with (\ref{4.16b}), (\ref{4.17b}) and the fact that
\be\no
\|\curl(q\Phi(\cdot,z_j))\|^2_{{\bm L}^2(\pa D)}
&=&\int_{\pa D}\left|\grad\Phi(x,z_j)\ti \nu(z_*)\right|^2\;{\rm d}s\\\label{4.18b}
&\cong&\int_{\pa D}\frac{1}{|x-z_j|^4}\;{\rm d}s+O(1),
\en
implies that $I^{(2)}_j\in {\bm L}^2(\pa D)$ is uniformly bounded for all $j\in\mat{N}$. 
It remains to show that $I^{(1)}_j\in {\bm L}^2(\pa D)$ is uniformly bounded for all $j\in\mat{N}$.
From the definition of $I_j^{(1)}$ and the equality (\ref{4.16b}), it is sufficient to prove this fact 
for the first component of $I_j^{(1)}$. In view of (\ref{4.16b}) and (\ref{4.18b}), we only need to 
show that the sequence
\be\label{4.19b}
\frac{1}{\|\curl[q\Phi(x,z_j)]\|_{{\bm L}^2(\pa D)}}
\left(\frac{(x_3-z_j^{(3)})x_2e^{{\rm i}\ka|x-z_j|}}{|x-z_j|^5}
+\frac{(x_3-y_j^{(3)})x_2e^{{\rm i}\ka|x-y_j|}}{|x-y_j|^5}\right)\qquad
\en
is uniformly bounded in ${\bm L}^2(\pa D)$ for all $j\in\mat{N}$.
	
For $x=(x_1,x_2,x_3)^T$, $y=(y^{(1)},y^{(2)},y^{(3)})^T$ and $z=(z^{(1)},z^{(2)},z^{(3)})^T$, 
by a direct calculation we have
\be\no
&&\frac{(x_3-z^{(3)})x_2e^{{\rm i}\ka|x-z|}}{|x-z|^5}+\frac{(x_3-y^{(3)})x_2e^{{\rm i}\ka|x-y|}}{|x-y|^5}\\ \no
&&=\frac{(x_3-z^{(3)})x_2}{|x-z|^4}\left(\frac{e^{{\rm i}\ka|x-z|}}{|x-z|}-\frac{e^{{\rm i}\ka|x-y|}}{|x-y|}\right)
+\frac{(x_3-z^{(3)}+x_3-y^{(3)})x_2e^{{\rm i}\ka|x-y|}}{|x-y|^5}\\\no
&&+\;\left(\frac{(x_3-z^{(3)})x_2e^{{\rm i}\ka|x-y|}}{|x-z|^2|x-y|^2}
+\frac{(x_3-z^{(3)})x_2e^{{\rm i}\ka|x-y|}}{|x-z||x-y|^3}\right)
\left(\frac{1}{|x-z|}-\frac{1}{|x-y|}\right)\\ \label{4.20b}
&&+\;\left(\frac{(x_3-z^{(3)})x_2e^{{\rm i}\ka|x-y|}}{|x-y|^4}
+\frac{(x_3-z^{(3)})x_2e^{{\rm i}\ka|x-y|}}{|x-z|^3|x-y|}\right)\left(\frac{1}{|x-z|}-\frac{1}{|x-y|}\right)\qquad
\en
It follows from \cite{AN08} that there exists a constant $C_3>0$ such that
\be\label{4.21b}
\left|\frac{1}{|x-z_j|}-\frac{1}{|x-y_j|}\right|\le C_3,\quad|\Phi(x,z_j)-\Phi(x,y_j)|\le C_3,\qquad x\in\pa D.
\en
Recalling $z_*=(0,0,0)^T$ and $\nu(z_*)=(0,0,1)^T$, we deduce by Taylor's expansion that 
there exists a constant $C_4>0$ such that
\be\label{4.22b}
|x_3|\le C_4(x_1^2+x_2^2)\quad{\rm for\;all}\;x\in\pa D.
\en
Inserting (\ref{4.21b}) and (\ref{4.22b}) into (\ref{4.20b}) and using (\ref{4.18b}), we obtain that 
the sequence in (\ref{4.19b}) is uniformly bounded in ${\bm L}^2(\pa D)$ for all $j\in\N$, 
which means that $I_j^{(1)}$ is uniformly bounded in ${\bm L}^2(\pa D)$ for all $j\in\N$. 
Thus, and by (\ref{4.15b}), the inequality (\ref{4.14b}) holds.
By (\ref{4.13b}), (\ref{4.14b}) and Theorem \ref{thm3.9} for the problem (\ref{4.13bb}), 
we obtain the uniform estimate
\be\no
&&\|\hat{\bm H}_j(\cdot)\|_{H(\curl,B_R\se\ov{D})}+\|{\bm u}(\cdot,z_j,q)\|_{{\bm H}^1(D)}\\ \label{4.23b}
&&\qquad\qquad\qquad\qquad\quad\leq C_4(\|{\bm f}_{1j}\|_{{\bm H}^{-1/2}(\pa D)}
+\|{\bm f}_{2j}\|_{H^{-1/2}_{\Div}(\pa D)})\le C_5,
\en
where $C_4,C_5>0$ are two positive constants independent of $j\in\N$.
	
Since $\pa D\in C^2$ and $D_1\not= D_2$, we can choose a small subdomain of $D$, denoted by $D_0$ 
with a $C^2$-smooth boundary $\pa D_0$, satisfying that $D_0\subset D\se\ov{\wi{D}}$
and the intersection $B\cap\pa D_0$ contains an open segment of $\pa D$. 
In $D_0$, we can construct the following boundary value problem
\be\label{4.24b}
\left\{
\begin{array}{ll}
\ds\curl\curl {\bm F}_{0,j} +{\bm F}_{0,j} =\xi_{1,j}& \quad{\rm in\;}\;D_0,\\[1mm]
\ds\na\cdot( \mathcal{C}:\na{\bm E}_{0,j}) -{\bm E}_{0,j}=\xi_{2,j} &\quad {\rm in\;}\; D_0,\\[1mm]
\ds T{\bm E}_{0,j}-b_1\nu \ti{\bm F}_{0,j} ={\bm h}_{1,j} &\quad {\rm on\;}\;\pa D_0,\\[1mm]
\ds \nu\times\curl{\bm F}_{0,j}+\frac{{\rm i}\ka}{b_2}\nu\times{\bm E}_{0,j}
={\bm h}_{2,j}& \quad{\rm on\;}\;\pa D_0,
\end{array}
\right.
\en
with $\xi_{1,j},\xi_{2,j},{\bm h}_{1,j}$ and $ {\bm h}_{2,j}$ defined as follows:
\ben
\xi_{1,j}: &=& (\ka^2+1)\wi{\bm H}(\cdot,z_j,q),\\
\xi_{2,j}: &=& (\rho\om^2+1){\bm u}(\cdot,z_j,q),\\
{\bm h}_{1,j}: &=&T{\bm u}(\cdot,z_j,q)-  b_1\nu\ti\wi{\bm H}(\cdot,z_j,q),\\
{\bm h}_{2,j}: &=& \nu\ti\curl\wi{\bm H}(\cdot,z_j,q)+\frac{{\rm i}\ka}{b_2} \nu\ti{\bm u}(\cdot,z_j,q).
\enn
By Theorem \ref{thm3.9} it can be verified that all data are well-defined in the related Sobolev spaces 
for each fixed $j\in\N$. By Lemma \ref{lem4.1}, the problem (\ref{4.24b}) is uniquely solvable 
with the estimate
\be\no
\|{\bm F}_{0,j}\|_{H(\curl,D_0)}+\|{\bm E}_{0,j}\|_{{\bm H}^1(D_0)}
&\leq &C_6\left[\|\xi_{1,j}\|_{{\bm L}^2(D_0)}+\|\xi_{2,j}\|_{{\bm L}^2(D_0)}\right.\\ \label{4.25b}
&&\quad\left. +\|{\bm h}_{1,j}\|_{{\bm H}^{-\frac{1}{2}}(\pa D_0)} 
+\|{\bm h}_{2,j}\|_{H^{-\frac{1}{2}}_{\Div}(\pa D_0)}\right],\qquad
\en
where $C_6>0$ is a fixed constant.
	
We now claim that the right-hand side of the inequality (\ref{4.25b}) is uniformly bounded for all $j\in\N$. 
In fact, it first follows from (\ref{4.23b}) that $\xi_{2,j}$ is uniformly bounded in
${\bm L}^2(D_0)$ for all $j\in\N$. Due to the positive distance between $D_0$ and $\wi{D}$, 
it can be concluded by the well-posedness of the problem (\ref{3.2a}) associated with the elastic body
$\wi{D}$ and the uniform boundedness of ${\bm H}^i(\cdot,z_j,q)$ in ${\bm L}^2(D_0)$ 
that $\xi_{1,j}$ is uniformly bounded in ${\bm L}^2(D_0)$ for all $j\in\N$.
By recalling the equality (\ref{4.11b}) and the transmission conditions (\ref{2.5a}) and (\ref{2.5b}), 
it is deduced that
\ben
{\bm h}_{1,j}=0,\qquad {\bm h}_{2,j}=0\quad {\rm on\;}\;\;\pa D_0\cap\pa D
\enn
for all $j\in\N$. Thus we only need to show that ${\bm h}_{1,j}$ and ${\bm h}_{2,j}$ are uniformly 
bounded in ${\bm H}^{-1/2}(\G)$ and ${\bm H}^{-1/2}_{\Div}(\G)$, respectively, where
$\G:=\pa D_0\se\ov{B_\vep(z_*)}$ and $B_\vep(z_*)$ is a ball with sufficiently small radius $\vep>0$ 
such that $B_\vep(z_*)\subset D_0$.
Since $\wi{\bm H}(\cdot,z_j,q)={\bm H}^i(\cdot,z_j,q)+\wi{\bm H}^s(\cdot,z_j,q)$, and by (\ref{4.23b}), 
we deduce that
\ben
\|{\bm u}(\cdot,z_j,q)\|_{{\bm H}^1(D_0\se\ov{B_\vep(z_*)})}
+\|\wi{\bm H}(\cdot,z_j,q)\|_{H(\curl,D_0\se\ov{B_\vep(z_*)})}\le C_7
\enn
for a some constant $C_7>0$, whence the uniform boundedness of 
$\|{\bm h}_{1,j}\|_{{\bm H}^{-1/2}(\pa D_0)}$ and $\|{\bm h}_{2,j}\|_{H^{-1/2}_{\Div}(\pa D_0)}$ 
follows from the trace theorems.
	
It is easy to verify that $({\bm F}_{0,j},{\bm E}_{0,j}):=(\wi{\bm H}(\cdot,z_j,q),{\bm u}(\cdot,z_j,q))$ 
is the unique solution to the problem (\ref{4.24b}). Then, by (\ref{4.25b}) we have
\be\label{4.26}
\|\wi{\bm H}(\cdot,z_j,q)\|_{H(\curl,D_0)}\le C_8\;\;\;\forall\;j\in\N
\en
for some constant $C_8>0$ independent of $j\in\N$. On the other hand, due to 
the positive distance between $z_*$ and $\wi{D}$, we have 
\be\label{4.27}
\|\wi{\bm H}^s(\cdot,z_j,q)\|_{H(\curl,D_0)}\le C_9\;\;\;\forall\;j\in\N
\en
for some constant $C_9>0$ independent of $j\in\N$. From (\ref{4.27}) it follows that 
\ben
\|\wi{\bm H}(\cdot,z_j,q)\|_{H(\curl,D_0)}
&=&\|{\bm H}^i(\cdot,z_j,q)+\wi{\bm H}^s(\cdot,z_j,q)\|_{H(\curl,D_0)}\\
&\ge&\|{\bm H}^i(\cdot,z_j,q)\|_{H(\curl,D_0)}-\|\wi{\bm H}^s(\cdot,z_j,q)\|_{H(\curl,D_0)}\\
&\ge&\|{\bm H}^i(\cdot,z_j,q)\|_{H(\curl,D_0)}-C_9.
\enn
By \cite[Theorem 3.8]{JBH18} it is seen that the right-hand side of the above inequality goes to infinity 
as $j\to\infty$, which contradicts to the inequality (\ref{4.26}), meaning that $D=\wi{D}$.
The proof is thus complete.
\end{proof}

\section*{Acknowledgements}

This work is partially supported by the NNSF of China Grants No. 11771349, 91730306, 91630309 
and Fundamental Research Funds for the Central Universities of China.

\end{document}